\newtheorem{theorem}{Theorem}[section]
\newtheorem{lemma}[theorem]{Lemma}
\theoremstyle{definition}
\newtheorem{corollary}[theorem]{Corollary}
\newtheorem{definition}[theorem]{Definition}
\newtheorem{proposition}[theorem]{Proposition}
\newcommand{\fracd}[2]{{\displaystyle\frac{#1}{#2}}}
\newcommand{\abs}[1]{\vert #1 \vert}
\newcommand{\bi}{\begin{itemize}}
\newcommand{\ei}{\end{itemize}}
\newcommand{\ben}{\begin{enumerate}}
\newcommand{\een}{\end{enumerate}}
\title{Regular sets and counting in free groups}
\author{Elizaveta Frenkel\\ Alexei G. Myasnikov\\ Vladimir N. Remeslennikov}
\begin{document}
\maketitle



\tableofcontents

\section{Introduction}

In this paper we study asymptotic behavior of regular subsets in a
free group $F$ of finite rank, compare their sizes at infinity, and
develop  techniques to compute the probabilities of sets relative to
distributions on $F$ that come naturally from random walks on the
Cayley graph of $F$. We apply these techniques to study cosets,
double cosets, and Schreier representatives of finitely generated
subgroups of $F$ with an eye on complexity  of algorithmic  problems
in free products with amalgamation and HNN extensions of groups.

During the last decade it has been realized that a natural set of algebraic objects usually can be divided into two parts. The large one (the {\em regular part}) consists of  typical, "generic" objects; and the smaller one (the "singular" part)  is made of "exceptions". Essentially, this idea  appeared first in the form of zero-one laws in probability  theory, number theory, and combinatorics. It became popular after seminal works of Erd\H{o}s,  that shaped up the so-called Probabilistic Method (see, for  example, \cite{AS}). In finite group theory the idea of genericity can be traced down to a series of papers by Erd\H{o}s and Turan in 1960-70's (for recent results see, for example,  \cite{shalev}).  In combinatorial group theory the concept of generic behavior is due to Gromov. His  inspirational  works \cite{Gromov1,Gromov2} turned the subject into an  area of very active research, see for example,  \cite{AO,A1,A2,multiplicative,BogV,BV,BMS,CET,CPAI,BMR2,BM,BV,Champetier1,Champetier2, Jitsukawa,  KMSS1,KMSS2,KSS,KS1,KS2,KRSS,KR,Ollivier,Olsh,Rom,Enric,Zuk}. It turned out that the generic objects usually have  much simpler structure, while the exceptions provide most of the difficulties. For instance, generic finitely generated groups are hyperbolic \cite{Gromov1,Olsh}, generic subgroups of hyperbolic groups are  free \cite{GMO}, generic cyclically reduced elements in free groups are of minimal length in their automorphic orbits \cite{KRSS}, generic automorphisms of a free group are strongly irreducible  \cite{Rivin}, etc.

   In practice, the generic-case analysis of algorithms is usually  more important than the worst-case one. For example, knowing generic properties of objects one can often design simple algorithms that work very fast on generic elements. In cryptography, many successful attacks  exploit  the generic properties of random elements from a particular class, ignoring the existing hard instances \cite{MU,MSUbook,MSU,RST}.  In the precise form the  generic complexity of algorithmic problems appeared first in the papers \cite{KMSS1,KMSS2,multiplicative,BMS}. We refer the reader to a comprehensive survey \cite{GMMU} on generic complexity of algorithms.

 In this paper we lay down some techniques that allow one to measure sets which appear naturally when computing with infinite finitely presented groups. Our main idea is to approximate a given set by some regular subsets and estimate the asymptotic sizes of  the regular sets using powerful tools of  random walks on graphs and generating functions. The particular applications we have in mind concern with the generic complexity of the Word and Conjugacy problems in free products with amalgamation and HNN extensions. In general, such  problems  can be extremely hard. In  \cite{miller1} Miller described a free product of free groups with  finitely generated amalgamation
 where the  Conjugacy problem is undecidable; while in \cite{miller1} he
 gave similar examples in the class of  generalized
HNN-extension of  free groups. However,  it has been proven  in \cite{CPAI,BMR2} that  on a precisely described set $RP$ of "regular elements" in amalgamated free products and HNN extensions $G$ the Conjugacy problem is decidable (under some natural conditions on the factors), furthermore,  it is decidable in polynomial time.   Namely,
it was shown in \cite{CPAI,BMR2} that
the group $G$ (satisfying some natural assumptions) can be stratified into two parts with respect to the
``hardness'' of the conjugacy problem:
 \begin{itemize}
\item  the  \emph{Regular Part} $RP$ consists of so-called {\em
regular elements} for which the conjugacy problem is decidable in polynomial time by the
 standard algorithms (described in  \cite{MKS,LS}). Moreover, one can decide whether or not a given element is regular in $G$;
 \item  the {\em Black Hole} $BH$ (the complement of $RP$ in $G$)
  consists of elements in $G$ for
 which either the standard algorithms  do not work at all, or they are slow, or the situation is not quite clear yet.

  \end{itemize}

 The missing piece is to show that the set $RP$ is, indeed, generic in $G$. This is not easy, the complete proof, which  will appear in \cite{fmrIII}, relies on the techniques developed in the present paper.
Now, a few words on the structure of the paper. In Section \ref{Section:preliminaries}, following  \cite{multiplicative}, we describe  some  techniques  for measuring subsets
in  a free group $F$, the asymptotic classification of large and small  sets, and approximations via context-free  and regular sets.

 In Section \ref{section_Systems} we study, using graph techniques,
  Shreier system of representatives (transversal) of a finitely generated subgroup $C$ in  a free group $F$ of finite rank.
  If $S$ is a fixed Schreier transversal of $C$ then $s \in S$   is called {\em stable} (on the right) if $sc \in
S$ for any $c \in C$. Intuitively, the stable representatives are "regular", they are easy to deal with.

In Section  \ref{section_measurable} we estimate the sizes of various subsets of $F$. In particular, we show that $S$ is regular and thick (see definitions in Section \ref{Section:preliminaries}), meanwhile the set $S_{nst}$ of non-stable representatives from $S$ is exponentially negligible. Furthermore, the set $S_{nst}$ is exponentially negligible even relative to the set $S$. Our approach here is to "approximate" the sets in question by regular sets and to measure sizes of the regular sets using tools of random walks on graphs and Perron-Frobenius techniques.

In Section \ref{Section:theorem_spherical} we develop a technique to compare sizes of different regular sets at "infinity" and  give an asymptotic classification of  regular
subsets of $F$ relative to a fixed prefix-closed regular subset $L
\subseteq F$. The main result describes when regular subsets of $L$ are "large" or "small" at infinity in comparison  to $L$. Notice, in the case when $L = F$, this result has been proven in \cite{multiplicative} (Theorem 3.2).

\section{Preliminaries}\label{Section:preliminaries}

In this section, following  \cite{multiplicative}, we describe  some  techniques  for measuring subsets
in  a free group $F$, the asymptotic classification of large and small  sets, and approximations via context-free  and regular sets.

\subsection{Asymptotic densities}\label{subs_measure}

 Let $F = F(X)$ be a free group with basis $X=\{x_1,\dots,
x_m\}$. We use this notation throughout the paper.

Let $R$ be a subset of the free group $F$ and $S_k = \{\, w \in F
\mid |w|=k\,\}$ the sphere of radius $k$ in $F$. The fraction
\[f_k(R)= \frac{\vert R \cap S_k\vert}{\vert S_k\vert}
\]
is the {\em  frequency} of elements from $R$ among the words of
length $k$ in $F.$  The {\em asymptotic density}
 $\rho(R)$ of $R$ is defined by
  $$\rho(R) = \limsup_{k \rightarrow \infty} f_k(R).$$
$R$ is called {\em generic}  if
 $\rho(R) = 1$,  and {\em negligible} if $\rho(R) = 0$.
 If, in addition, there
exists a positive constant $\delta <1$ such that
$$1-\delta^k < f_k(R) < 1$$
 for all sufficiently large $k$
then $R$ is called {\em exponentially generic}. Meanwhile, if
$ f_k(R) < \delta^k$ for large enough $k$ then $R$ is {\em exponentially negligible}. In both the cases we refer to $\delta$ as a {\em rate upper bound}.
 Sometimes such sets are also  called \emph{strongly generic} or  \emph{strongly
negligible}, but we refrain from this.

The {\em Cesaro limit}
 \begin{equation}
 \label{eq:cesaro-1}
  \rho^c(R)  = \lim_{n\rightarrow\infty} \frac{1}{n}\left(f_1+\cdots+f_n\right).
  \end{equation}
  gives another asymptotic  characteristic, called  {\em Cesaro density}, or
    {\it asymptotic average density}.  Sometimes, it is more sensitive then standard asymptotic density $\rho$ (see,
for example, \cite{multiplicative}, \cite{woess}). However, if $\lim_{k\rightarrow \infty} f_k(R)$
exists (hence is equal to $\rho(R)$) then $\rho^c(R)$ also exists
and $\rho^c(R) = \rho(R)$. We will have to say more about $\rho^c(R)$ below.

Asymptotic density gives the first coarse classification of large (small) subsets:

\paragraph{Coarse classification}
\begin{itemize}
\item [1)] {\em Generic } sets;
\item [2)] {\em Visible} or {\em thick} sets: the set $R$ is {\em visible} if $\rho(R)> 0$;
\item [3)] {\em Negligible} sets.
\end{itemize}
Unfortunately, this classification is very coarse, it does not distinguish many sets which, intuitively, have different sizes.

All our results in this paper concern with the strong version of the asymptotic density $\rho$, when the actual limit $\lim_{k\rightarrow\infty}f_k(R)$ exists. This allows one to differentiate sets with the same asymptotic density with respect to their growth rates.   Thus generic sets $R$ divide into subclasses of {\em exponential, subexponential, superpolynomial, polynomial} generic sets, with respect  to the convergence rates of their frequency  sequences $\{f_k(R)\}_{k\in \mathbb{N}}$. The same holds for negligible sets as well.




\subsection{Generating random elements and multiplicative measures}
\label{subsec:random-generator}

  One can use a no-return random walk
$W_s$ ($s \in (0,1]$) on the Cayley graph $C(F,X)$ of $F$ with respect
to the generating set $X$, as a random generator of elements of $F$ (see \cite{multiplicative}).
 We start at the identity element $1$ and
either do nothing with probability $s$ (and return value $1$ as the
output of our random word generator), or move to one of the $2m$
adjacent vertices with equal probabilities $(1-s)/2m$. If we are at a
vertex $v \ne 1$, we either stop at $v$ with probability $s$ (and
return the value $v$ as the output), or move, with probability
$\frac{1-s}{2m-1}$, to one of the $2m-1$ adjacent vertices lying away
from $1$, thus producing a new freely reduced word $vx_i^{\pm 1}$. Since
the Cayley graph $C(F,X)$ is a tree and we never return to the
word we have already visited,
it is easy to
see that the probability $\mu_s(w)$ for our process to terminate at a word $w$
is given by the formula
\begin{equation}
\mu_s(w) = \frac{s(1-s)^{|w|}}{2m\cdot (2m-1)^{|w|-1}} \quad
\hbox{ for } w \ne 1
\end{equation}
and
\begin{equation}
\mu_s(1) =s.
\end{equation}

  For $R \subseteq F$ its measure $\mu_s(R)$ is defined by  $\mu_s(R)= \sum_{w\in R} \mu_s(w)$.
Recalculating $\mu_s(R)$ in terms of $s$, one gets
 $$ \mu_s(R) = s\sum_{k=0}^\infty f_k(1-s)^k, $$
 and the series on the right hand side is convergent for all $s \in
(0,1)$. The ensemble of distributions $\{\mu_s\}$ can be encoded in a single  function
$$\mu(R): s \in (0,1)  \rightarrow \mu_s(R) \in {\mathbb{R}}.$$
The argument above shows that for every subset $R \subseteq F$,
$\mu(R)$ is an analytic function of $s$.

 It has been shown in \cite{multiplicative} that $\mu(R)$ contains a lot of information about the asymptotic behaviour of the set $R$.  To see where this information comes from  renormalise the  measures $\mu_s$  and
consider the parametric  family $\mu^* = \{\mu_s^* \}$ of  {\em
adjusted measures}
\begin{equation}
\mu_s^*(w) = \left(\frac{2m}{2m-1} \cdot \frac{1}{s} \right)\cdot
\mu_s(w).
\end{equation}
This new measure $\mu_s^*$  is {\it multiplicative} in the sense
that
\begin{equation}
\mu_s^*(u\circ v) = \mu_s^*(u)\mu_s^*(v),
\end{equation}
where $u\circ v$ denotes the product of non-empty words $u$ and
$v$ such that $|uv| = |u| +|v|$ (no cancelation in the product $uv$).
  Moreover, if we denote
\begin{equation}
t = \mu_s^*(x_i^{\pm 1})=  \frac{1-s}{2m-1} \label{eq:adjusted}
\end{equation}
 then
\begin{equation}
\mu_s^*(w) = t^{|w|}
\end{equation}
 for every non-empty word $w$.  Assume now, for the sake of minor
technical convenience, that $R$ does not contain the identity
element $1$. It is easy to see that
$$
\mu^*_s(R) = \sum_{k=0}^\infty n_k(R)t^k $$
is  the generating function of the spherical growth
sequence
$$n_k(R) =  |R \cap S_k|$$
 of the set $R$  in variable $t$ which is convergent for
each  $t \in [0,1)$.

 The distribution $\mu_s$ has the uncomfortably big standard
deviation $\sigma = \frac{\sqrt{1-s}}{s}$, which reflects the fact
that $\mu_s$  is strongly skewed towards "short" elements.
The mean length of words in $F$
distributed according to $\mu_s$ is equal to $L_s = \frac{1}{s}
-1$, so $L_s \to \infty$ when $s \to 0$. This shows that the asymptotic behaviour of the set $R$ at "infinity" (when   $L_s \rightarrow \infty$)  depends on the behaviour of the function
$\mu(R)$ when $s\rightarrow 0^+$.

Following \cite{multiplicative}, for a subset $R$ of  $F$ we define a numerical characteristic
 $$ \mu_0(R) = \lim_{s \rightarrow 0^+}\mu(R) =  \lim_{s \rightarrow 0^+} s \cdot \sum_{k=0}^\infty
f_k(1-s)^k.$$
If $\mu(R)$ can be expanded  as a convergent power series in $s$
at $s=0$ (and hence in some neighborhood of $s = 0$):
 $$ \mu(R) = m_0 + m_1s + m_2s^2 + \cdots, $$ then
  $$\mu_0(R) = \lim_{s
\rightarrow 0^+} \mu_s(R) = m_0,$$
and an easy corollary from a theorem by Hardy
and Littlewood \cite[Theorem~94]{Hardy}
asserts that $\mu_0(R)$ is precisely the Cesaro limit $\rho^c(R)$.

A subset $R \subseteq F$ is called {\em smooth} \cite{multiplicative} if $\mu(R)$ can be expanded  as a convergent power series in $s$ at $s=0$.

\subsection{The frequency measure}
\label{subsec:freq-measure}

In this section we discuss the frequency measure, introduced in \cite{multiplicative}.

 Let $W_0$ be the  no-return
non-stop random walk  on the Cayley graph $C(F,X)$ of $F$ (like $W_s$ with $s =
0$), where the walker moves from a given vertex  to any adjacent
vertex away from the initial vertex 1 with equal probabilities
$1/2m$. In this event, the probability $\lambda(w)$ that the
walker hits an element $w \in F$ in $|w|$ steps
(which is the same as the probability
that the walker ever hits $w$)  is equal to
$$
\lambda(w) = \frac{1}{2m(2m-1)^{|w|-1}}, \ \hbox{ if } \ w \neq 1, \ \
\hbox{ and }  \ \lambda(1) = 1.
$$

 This gives rise to a  measure called  the {\em frequency } measure on $F$, or {\em Boltzmann distribution},
 defined for subsets $R \subseteq F$ by
$$ \lambda(R) = \sum_{w \in R}\lambda(w),$$
if the sum above is finite. One can view $\lambda(R)$ as  the {\em cumulative frequency}  of $R$ since
$$\lambda(R) = \sum_{k=0}^{\infty}f_k(R).$$
This measure  is not probabilistic,
since, for instance, $\lambda(F) = \infty$, moreover, $\lambda$ is additive, but not $\sigma$-additive.

 A subset $R \subseteq F$ is called {\em $\lambda$-measurable}, or simply {\em  measurable} (since we do not consider any other measures in this paper) if
$\lambda(R) < \infty$. Every measurable set is negligible.

\paragraph{Linear approximation.}
 If the set $R$ is smooth then  the linear term in the expansion  of
$\mu(R)$ gives a linear approximation of  $\mu(R)$:
$$
\mu(R) = m_0 + m_1s + O(s^2).
$$
 In this case,  $m_0 = \mu_0(R)$ is the Cesaro density of $R$.
An easy corollary of \cite[Theorem~94]{Hardy} shows
 that if $\mu_0(R) = 0$ then
$$ m_1 = \sum_{k=1}^\infty f_k(R) = \lambda(R). $$

On the other hand, even without assumption that $R$ is smooth,
 if $R$ is measurable, then
 $$\mu_0(R) = 0 \ \  \hbox{ and }\ \ \mu_1 = \lim_{s\rightarrow 0^+} \frac{\mu(s)}{s} =  \lambda(R).$$

\subsection{Asymptotic classification of subsets}
  \label{subsec:asymptotic}

In this section we describe   a classification of subsets $R$ in
$F$,  according to the asymptotic behavior of the functions
$\mu(R)$.

Recall, that the function
$\mu(R)$ is analytic on  $(0,1)$ for every subset $R$ of $F$.  $R$ is
{\em smooth} if $\mu(R)$ can be analytically extended to a
neighborhood of $0$. The subset $R$ is called
{\it rational, algebraic, etc,  } with respect to $\mu$  if the
function $\mu(R)$ is  rational, algebraic, etc.

\paragraph{Asymptotic classification of sets.}
  The following  subtler classification of sets in $F$
 (based on the linear approximation of $\mu(R)$) was introduced in \cite{multiplicative}:
\begin{itemize}
\item {\em Thick subsets}: $\mu_0(R)$ exists, $\mu_0(R) > 0$ and
$$\mu(R) = \mu_0(R) + \alpha_0(s), \ \ where \ \ \lim_{s \rightarrow
0^+}\alpha_0(s) = 0.$$

\item {\em Negligible subsets of intermediate density}: $\mu_0(R) = 0$
 but $\mu_1(R)$ does not exist.

 \item {\em Sparse negligible subsets}: $\mu_0(R) = 0$, $\mu_1(R)$ exists  and
$$\mu(R) = \mu_1(R)s + \alpha_1(s)\ \  where \ \ \lim_{s \rightarrow
0^+}\frac{\alpha_1(s)}{s}  = 0.$$

\item {\em Exponentially negligible sets:}
\item {\em Singular  sets}: $\mu_0(R)$ does not exist.

\end{itemize}

For sparse sets, the values of $\mu_1$ provide a further
and more subtle discrimination by size.

\begin{lemma} \cite{multiplicative}
A subset  is sparse in $F$ if and only if it is measurable.
\label{lm:lambda=sparse}
\end{lemma}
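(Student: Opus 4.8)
The plan is to observe that both properties in the statement are assertions about the boundary behavior, at $x=1$, of a single power series with nonnegative coefficients, and then to read them off via Abel's theorem together with its elementary converse for nonnegative series. Concretely, write $f_k = f_k(R)$ and set $g(x) = \sum_{k=0}^\infty f_k x^k$, the ordinary generating function of the frequency sequence. From the identity $\mu_s(R) = s\sum_{k=0}^\infty f_k(1-s)^k$ recorded in Section~\ref{subsec:random-generator} we get $\mu(R)/s = g(1-s)$, so the limit $s\to 0^+$ is exactly the one-sided limit $x = 1-s \to 1^-$. In this language measurability says $g(1) = \sum_k f_k = \lambda(R) < \infty$, while sparseness says that $\mu_1(R) = \lim_{s\to 0^+} g(1-s)$ exists and is finite (and $\mu_0(R)=0$). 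Since every $f_k \ge 0$, the series $g$ has nonnegative coefficients, is convergent on $[0,1)$, and is nondecreasing there; this monotonicity is the property that drives both implications.

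For the direction measurable $\Rightarrow$ sparse I would invoke Abel's theorem, the special case of \cite[Theorem~94]{Hardy} used already in Section~\ref{subsec:freq-measure}. If $\lambda(R)=\sum_k f_k<\infty$ then $g(1)$ converges, so $\lim_{x\to 1^-} g(x) = g(1) = \lambda(R)$; hence $\mu_1(R) = \lim_{s\to 0^+}\mu(R)/s = \lambda(R)$ exists and is finite. Moreover $\mu_0(R) = \lim_{s\to 0^+} s\,g(1-s) = 0$ because $g(1-s)$ stays bounded, and the remainder satisfies $\alpha_1(s)/s = g(1-s) - \mu_1(R) \to 0$. Thus all clauses in the definition of a sparse set hold, and in passing one recovers $\mu_1(R)=\lambda(R)$.

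For the converse, sparse $\Rightarrow$ measurable, suppose $L := \mu_1(R) = \lim_{x\to 1^-} g(x)$ exists and is finite. Because the $f_k$ are nonnegative, $g$ is nondecreasing on $[0,1)$, so $g(x) \le L$ for every $x < 1$. Then for each fixed $N$ the partial sum satisfies $\sum_{k=0}^N f_k x^k \le g(x) \le L$, and letting $x\to 1^-$ gives $\sum_{k=0}^N f_k \le L$. As $N$ is arbitrary, $\lambda(R) = \sum_{k=0}^\infty f_k \le L < \infty$, so $R$ is measurable.

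The main obstacle is precisely this converse. The forward implication is just Abel's theorem and is routine. But the existence of a finite Abel limit does not in general force convergence of the underlying series --- a Tauberian hypothesis is needed. Here the needed hypothesis is already built in: the coefficients $f_k = \vert R\cap S_k\vert/\vert S_k\vert$ are nonnegative, and it is exactly this nonnegativity (equivalently, the monotonicity of $g$ on $[0,1)$) that legitimizes the partial-sum comparison above. The only care required is to argue through monotonicity and the Abel limit rather than to appeal to a general Tauberian theorem, whose extra conditions are unnecessary in the nonnegative case.
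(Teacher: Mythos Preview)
Your argument is correct. The paper itself does not prove this lemma but merely cites \cite{multiplicative}; the only in-text justification is the remark in Section~\ref{subsec:freq-measure} that if $R$ is measurable then $\mu_0(R)=0$ and $\mu_1(R)=\lambda(R)$, obtained as a corollary of \cite[Theorem~94]{Hardy}. Your proof supplies exactly this Abelian step for the forward direction and adds the elementary Tauberian converse (monotonicity of $g$ from nonnegativity of the $f_k$), which is the standard route and is consistent with what the paper sketches; there is nothing to correct.
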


\subsection{Context-free and regular languages as a measuring tool}
\label{subsec:context-free}

The simple observation in  Section  \ref{subsec:random-generator} that $\mu(R)$ is the generating function of the grows sequence $\{n_k(R)\}_{k\in \mathbb{N}}$ allows one   to apply a well established machinery of
generating functions of regular and context-free languages to estimate asymptotic
sizes  of subsets $R$ in $F$. We refer to \cite{HU} on regular and context-free languages, and to \cite{eps} on regular languages in groups.

\paragraph{Algebraic sets and context free languages.}
If the set $R$ is an (unambiguous) context free language then, by
a classical theorem of Chomsky and Schutzenberger \cite{Ch-Sch},
the generating function $\mu^*(R) = \sum n_k(R)t^k$, and hence the
function $\mu(R)$, are algebraic functions of $s$. Moreover, if
$R$ is regular  then $\mu(R)$ is a rational function with
rational coefficients \cite{flajolet,stanley}.

It is well known that singular points of an algebraic function
are either poles
or branching points. Since $\mu(R)$ is bounded for
$s \in (0,1)$, this means that,
for a context-free set $R$, the function $\mu(R)$
has no singularity at $0$ or has a branching
point at $0$. A standard result on analytic functions allows us
to expand $\mu_s(R)$
 as a fractional power
series:
 $$ \mu_s(R) = m_0 + m_1 s^{1/n} + m_2s^{2/n} + \cdots ,$$
$n$ being the branching index. This technique was used in \cite{BMR2,BM} for numerical estimates of generic complexity of algorithms.

If $R$ is regular, than we actually have the usual power series expansion:
 $$ \mu_s(R) = m_0 + m_1 s + m_2s^{2} + \cdots; $$
 in particular,  $\mu(R)$ can be analytically extended in the
 neighborhood of $0$, so $R$ is smooth.

 The following gives an asymptotic  classification of regular subsets of $F$.
 \begin{theorem}\cite{multiplicative,af_semr}
\label{th:regular-negl-thick}
\bi
\item [1)] Every negligible regular subset of $F$ is strongly
negligible.
\item [2)] A regular subset of $F$ is thick if and only if its prefix closure contains a cone.
\item [3)] Every regular subset of $F$ is either thick or strongly
negligible. \ei
\end{theorem}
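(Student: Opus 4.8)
The plan is to read everything off a single finite automaton for $R$ together with the rational generating function $\mu^*_s(R)=\sum_k n_k(R)t^k$, $t=\tfrac{1-s}{2m-1}$. First I would fix a reduced-word representation of $R$ and form the trim (accessible and co-accessible) product automaton $\mathcal{B}$ of a deterministic automaton for $R$ with the last-letter automaton that enforces free reducedness; let $M$ be its nonnegative integer transition matrix and let $\beta$ be its Perron-Frobenius value, equivalently the exponential growth rate of the number of length-$k$ runs. Two structural facts drive the whole argument: every non-initial state of $\mathcal{B}$ has out-degree at most $2m-1$, since a reduced word cannot backtrack on its last letter, so $\beta\le 2m-1$; and, because $\mathcal{B}$ is trim, the sequences $n_k(R)$ and $|\overline{R}\cap S_k|$ for the prefix closure $\overline{R}$ share the same exponential growth rate $\beta$.

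For parts (1) and (3) I would run the analytic dichotomy on $\mu^*_s(R)$, which is rational by the theorem of Chomsky and Schutzenberger. Its radius of convergence is $r=1/\beta\ge t_0:=1/(2m-1)$, and by Pringsheim's theorem $t=r$ is a singularity, hence a pole. If $\beta<2m-1$ then $r>t_0$, and $f_k(R)=n_k(R)/|S_k|\le C\,(\beta'/(2m-1))^k$ for any $\beta<\beta'<2m-1$ and large $k$, so $R$ is exponentially negligible. If $\beta=2m-1$ then $t_0$ is a pole, and the domination $n_k(R)\le|S_k|=O((2m-1)^k)$ forces it to be simple, since a pole of order $d\ge 2$ would give $n_k\sim Ck^{d-1}(2m-1)^k$, too large; substituting $t_0-t=s/(2m-1)$ into $\mu_s(R)=\tfrac{2m-1}{2m}\,s\,\mu^*_s(R)$ then yields a finite positive limit $\mu_0(R)>0$, so $R$ is thick. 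This proves (3). Part (1) is the contrapositive via (3): a negligible regular set is not thick, so $\mu_0(R)=0$, which forces $\beta<2m-1$ and hence exponential negligibility.

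For part (2) I would match the analytic condition $\beta=2m-1$ with the combinatorial one through the row sums of $M$. Since each relevant row sum is at most $2m-1$, the Perron value $\beta$ equals $2m-1$ exactly when some strongly connected component of $\mathcal{B}$ is \emph{fully branching}, meaning every state of that component has all $2m-1$ admissible continuations present and landing back inside the component. In that case, choosing any word $u$ that leads from the start state into such a component, an induction on length shows that every reduced extension $uv$ retains a valid run and is therefore in $\overline{R}$; thus $\overline{R}$ contains the cone $C_u$. Conversely, if $C_u\subseteq\overline{R}$ then $|\overline{R}\cap S_k|\ge (2m-1)^{k-|u|}$, forcing the growth rate of $\overline{R}$, equivalently of $R$, to equal $2m-1$, so $R$ is thick. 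Combined with the equivalence ``$\beta=2m-1$ iff $R$ is thick'' from the previous paragraph, this gives (2).

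The main obstacle is the Perron-Frobenius step in part (2): one must handle a possibly reducible and periodic matrix $M$ and argue cleanly that the Perron value attains the maximal row sum $2m-1$ only when an entire component is fully branching, and that ``out-degree $2m-1$ under the reducedness constraint'' really supplies all $2m-1$ distinct admissible letters with targets back in the component; this is precisely where the product with the last-letter automaton is needed. The remaining delicate point is the transfer between $R$ and $\overline{R}$, namely that trimness makes their growth rates coincide, since the cone condition is naturally about prefixes while thickness is a statement about $R$ itself.
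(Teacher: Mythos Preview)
The paper does not give its own proof of this theorem; it is quoted from \cite{multiplicative,af_semr}. The method of \cite{multiplicative} is, however, visible in the paper's proof of Theorem~\ref{Lcones}, which explicitly ``follows the proof of Theorem~\ref{th:regular-negl-thick}'': one normalises the automaton, splits it at the accept state into $\mathcal{B}_1$ and a loop part $\mathcal{B}_2$, writes $R_2=(R_3)^{\circ}_\infty$ with $R_3$ the first-return language, and then argues probabilistically. A finite absorbing Markov chain with a dead state $D$ (created precisely when some state lacks an admissible outgoing letter, i.e.\ when no cone is contained) gives $\lambda(R_3)<1$ and exponential decay of $f'_k(R_3)$, and the Hardy--Ramanujan partition bound (Lemma~\ref{le:T}) then converts this into exponential $\lambda$-measurability of $R_2$ and hence of $R$.

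Your route is genuinely different and correct: you work spectrally and analytically rather than probabilistically. The dichotomy $\beta<2m-1$ versus $\beta=2m-1$ for the Perron value of the trim product automaton, together with Pringsheim and the bound $n_k(R)\le|S_k|$ forcing the pole at $t_0$ to be simple, gives (3) and (1) very cleanly, and the row-sum characterisation of when an irreducible block attains the maximal Perron value $2m-1$ gives (2). The point you flag---that for a reducible $M$ one must locate a strongly connected component whose \emph{restricted} row sums are all $2m-1$, so that all admissible edges stay inside the component---is exactly the place requiring care, and the product with the last-letter automaton is what makes ``out-degree $2m-1$'' mean ``all $2m-1$ non-backtracking letters''. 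Your transfer between $R$ and $\overline{R}$ via trimness is fine: co-accessibility bounds the distance from any state to an accept state, so the two growth rates coincide.

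What each approach buys: yours is shorter and more transparent in the absolute case $L=F$, since $|S_k|$ is an explicit geometric sequence and the rational generating function does the work. The Markov-chain argument of \cite{multiplicative} is more portable to the relative setting of Section~\ref{subsec:comp-regular}, where one measures against a prefix-closed regular $L$ whose spheres have no closed form and the relevant ``frequencies'' are path-dependent; there the absorbing-chain viewpoint and Lemma~\ref{le:T} substitute for the missing global normalisation.
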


\section{Schreier Systems of
Representatives}\label{section_Systems}

\subsection{Subgroup and coset graphs}\label{subs_graphs}
In this section for a given finitely generated subgroup of a free group we discuss its subgroup and coset graphs.

Let $F = F(X)$ be a free group with basis $X=\{x_1,\dots,
x_n\}$.  We identify  elements of $F$ with  reduced words  in the alphabet $X \cup X^{-1}$.
Fix a subgroup  $C = \langle h_1, \ldots, h_m \rangle$ of $F$
  generated by finitely many elements $h_1, \ldots, h_m  \in F$.

Following  \cite{km}, we associate with $C$ two graphs: \emph{the
subgroup graph} $\Gamma =\Gamma_C$ and the coset graph $\Gamma^\circ
= \Gamma^\circ_C$. We freely  use definitions and results from
\cite{km} in the rest of the paper.

Recall, that $\Gamma$  is a finite connected digraph with edges
labeled by elements from $X$ and a distinguished vertex
(based-point) $1$,  satisfying  the following two conditions.
Firstly,  $\Gamma$ is folded, i.e., there are no two edges in
$\Gamma$ with the same label and having  the same initial or
terminal vertices. Secondly, $\Gamma$ accepts precisely the reduced
words in $X \cup X^{-1}$  that belong to $C$. To explain the latter
observe, that walking along a path $p$ in $\Gamma$ one  can read a
word $\ell(p)$ in the alphabet $X \cup X^{-1}$, the label of $p$,
(reading $x$ in  passing an edge $e$ with label $x$ along the
orientation of $e$, and reading $x^{-1}$ in the opposite direction).
We say that $\Gamma$ accepts a word $w$ if $w = \ell(p)$ for some
closed path $p$ that starts at $1$ and has no backtracking. One can
describe $\Gamma$ as a deterministic  finite state  automata with
$1$ as the  unique  starting and accepting state.

For example, the graph $\Gamma$ for the subgroup
generated by $x_1x_2x_1^{-1}$ is shown in the picture below.

\begin{center}
\setlength{\unitlength}{.5mm}
\begin{picture}(100,40)(-20,0)
\thicklines \put(19,18){$1$} \put(20,25){\circle*{2}}
\put(20,25){\vector(1,0){38}} \put(38,26){$x_1$}
\put(60,25){\circle*{2}}
\put(60.5,24){\vector(-1,3){0}} \put(70,25){\circle{20}}
\put(82,25){$x_2$}
\end{picture}\\
Pic. 1.
\end{center}

Given the generators $h_1, \ldots, h_m$ of the subgroup $C$, as words from $F(X)$, one can effectively construct the graph $\Gamma$ in time $O(n\log^\ast n)$ \cite{Touikan}.

The {\em  coset graph} (also known as the Schreier graph) $\Gamma^\ast = \Gamma_C^\ast$ of $C$  is a connected labeled digraph with
 the set $\{Cu \mid u \in F\}$ of the right cosets of $C$ in $F$ as the vertex set, and such that there is an edge from
 $Cu$ to $Cv$ with a label $x \in X$ if and only if $Cux = Cv$.  One can describe the coset graph $\Gamma^\ast$ as  obtained from $\Gamma$
by the following procedure. Let
$v \in \Gamma$ and $x \in X$ such that there is no outgoing or incoming
 edge at $v$ labeled by $x$. We call such $v$ a {\em boundary}  vertex of $\Gamma$ and denote the set of such vertices
 by $\partial \Gamma$. For every such $v \in \partial \Gamma$ and $x \in X$ we attach
to $v$ a new edge $e$ (correspondingly, either outgoing or incoming)  labeled $x$ with a new terminal vertex $u$ (not in $\Gamma$). Such
vertices $u$ are called {\em frontier} vertices, we denote the set of frontier vertices of $\Gamma$ by $\partial^+ \Gamma$. Then we
 attach to $u$ the Cayley graph $C(F,X)$ of $F$ relative to $X$ (identifying $u$ with the vertex $1$ of $C(F,X)$), and
 then we fold the edge $e$ with the corresponding edge in $C(F,X)$ (that is labeled $x$ and is incoming to $u$).
Observe, that for every vertex $v \in \Gamma^\ast$ and every reduced word $w$ in $X \cup X^{-1}$ there is a unique path $\Gamma^\ast$
that starts at $v$ and has the label $w$.  By $p_w$ we denote such a path that starts at $1$, and by $v_w$ the end vertex of $p_w$.
 Here is
the fragment of the
 graph $\Gamma^\ast$ for  $C = \left< x_1x_2x_1^{-1}\right>$:

\begin{center}
\setlength{\unitlength}{.5mm}
\begin{picture}(100,120)(-20,-40)
\thicklines \put(22,26){$1$} \put(20,25){\circle*{2}}
\put(20,25){\vector(1,0){38}} \put(20,65){\vector(1,0){18}}
\put(20,65){\vector(0,1){18}} \put(0,65){\vector(1,0){18}}

\put(38,26){$x_1$} \put(60,25){\circle*{2}}
\put(60,25){\vector(1,0){18}}
\put(62.5,24){\vector(-3,1){0}} \put(60,15){\circle{20}}
\put(58,1){$x_2$} \put(68,26){$x_1$} \put(80,25){\circle*{2}}
\put(80,25){\vector(1,0){18}} \put(80,25){\vector(0,1){18}}
\put(80,5){\vector(0,1){18}}

\put(-20,25){\circle*{2}} \put(20,25){\vector(0,1){38}}
\put(20,-15){\vector(0,1){38}} \put(20,-15){\circle*{2}}
\put(20,-15){\vector(1,0){18}} \put(0,-15){\vector(1,0){18}}
\put(20,-35){\vector(0,1){18}}

\put(22,5){$x_2$} \put(22,45){$x_2$}
\put(-20,25){\vector(1,0){38}} \put(-2,26){$x_1$}

\put(-20,25){\vector(0,1){18}} \put(-40,25){\vector(1,0){18}}
\put(-20,5){\vector(0,1){18}}
\end{picture}\\
Pic. 2.
\end{center}

 \begin{lemma}  \label{vnr:5.1}
  $\Gamma^\ast_C $ is the coset graph of $C$ in $F$.
\end{lemma}
 \begin{proof} See, for example, \cite{km}. \end{proof}
Notice that $\Gamma = \Gamma^\ast$ if and only if the subgroup $C$
has finite index in $F$. Indeed, $\Gamma = \Gamma^\ast$  if and
only if  for every vertex $v$ of $\Gamma$ and every label $x\in
X$, there is an edge in $\Gamma$ labeled by $x$ which exits from
$v$, and an edge with label $x$ which enters $v$, but this is
precisely the characterization of  subgroups of finite index in
$F$ \cite[Proposition~8.3]{km}.

A spanning  subtree $T$ of $\Gamma$  with the root at the vertex 1
is called {\em geodesic}  if for every vertex $v \in V(\Gamma)$
the unique path in $T$ from 1 to $v$  is a geodesic path in
$\Gamma$. For a given graph $\Gamma$ one can effectively construct
a geodesic spanning subtree $T$ (see, for example, \cite{km}).

From now on we fix an arbitrary  spanning  subtree $T$ of
$\Gamma$. It is easy to see that the tree $T$ uniquely extends to a spanning subtree
$T^\ast$ of $\Gamma^\ast$.

Let $V(\Gamma^\ast)$ be the set of vertices of $\Gamma^\ast$. For
a subset $Y \subseteq V(\Gamma^\ast)$ and a subgraph $\Delta$ of
$\Gamma^\ast$, we define the \emph{language accepted by} $\Delta$
and $Y$ as the set $L(\Delta, Y,1)$ of the labels $\ell(p)$ of  paths $p$ in $\Delta$ that start at $1$ and end
at one of the vertices in $Y$, and have no backtracking. Notice
that the  words $\ell(p)$ are reduced since
the graph $\Gamma^\ast$ is folded. Notice, that $F = L(\Gamma^{\ast}, V(\Gamma^{\ast}), 1)$ and  $C = L(\Gamma,\{1\},1) =  L(\Gamma^{\ast}, \{1\}, 1)$.

Sometimes we will refer to a set of right (left) representatives
of $C$ as the right (left) {\em transversal} of $C$. Furthermore,
to simplify terminology, a  transversal will usually mean a right
transversal, if not said otherwise. Recall, that a  transversal $S$ of $C$ is
termed {\em Schreier} if  every initial segment of a representative from $S$  belongs to $S$.

\begin{proposition}
\label{prop:coset-graph} Let $C$ be a finitely generated subgroup
of $F$. Then:

\ben

 \item [1)] for every spanning subtree $T$ of $\Gamma$ the set $S_{T^\ast} = L(T^{\ast}, V(\Gamma^{\ast}), 1)$ is  a
Schreier transversal of $C$ .
 \item [2)] for every Schreier transversal $S$ of $C$ there exists a spanning
 subtree $T$ of $\Gamma$ such that $S =
 S_{T^\ast}$.

  \een
\end{proposition}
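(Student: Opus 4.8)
For Part 1), the plan is to exploit the defining property of the coset graph recorded in Lemma~\ref{vnr:5.1}: reading a reduced word $w$ along the unique path $p_w$ issuing from the base vertex $1$ lands at the vertex $v_w$ which is exactly the coset $Cw$, so $v \mapsto (\text{the coset it names})$ is a bijection between $V(\Gamma^\ast)$ and the right cosets of $C$. Since $T^\ast$ is a tree, for each vertex $v$ there is a unique reduced path in $T^\ast$ from $1$ to $v$; let $s_v$ be its (reduced) label, so that $S_{T^\ast} = \{\, s_v \mid v \in V(\Gamma^\ast)\,\}$. First I would check that $S_{T^\ast}$ is a transversal: the path $p_{s_v}$ in $\Gamma^\ast$ coincides with this tree-geodesic, hence $v_{s_v}=v$ and $Cs_v = v$, so $s_v$ represents the coset $v$, every coset is represented (the vertex naming it supplies its representative), and two labels $s_v,s_w$ represent the same coset only if $v=w$. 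Then I would verify the Schreier (prefix-closed) property: a prefix of $s_v$ is the label of an initial segment of the geodesic $p_{s_v}$, ending at some intermediate vertex $v'$; in a tree this initial segment is itself the geodesic from $1$ to $v'$, so the prefix equals $s_{v'}\in S_{T^\ast}$. (In particular the empty word, representing $C$, lies in $S_{T^\ast}$.)

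For Part 2), given a Schreier transversal $S$, the plan is to build the tree directly inside $\Gamma^\ast$ and then push it down to $\Gamma$. Let $T^\ast$ be the subgraph of $\Gamma^\ast$ obtained as the union of the reduced paths $p_s$, $s\in S$. It spans $V(\Gamma^\ast)$ because $S$ is a transversal (the endpoints $v_s$ range over all cosets, i.e. all vertices), and it is connected because every $p_s$ starts at $1$. The crux is acyclicity, and here the Schreier hypothesis enters. For each vertex $v\neq 1$ with representative $s_v\in S$, prefix-closedness gives that the prefix $s'$ obtained by deleting the last letter of $s_v$ again lies in $S$, so the final edge $e_v$ of $p_{s_v}$ joins $v$ to the vertex $\mathrm{par}(v):=v_{s'}$, whose representative $s'$ is shorter by exactly one. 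Prefix-closedness also shows that every edge of $T^\ast$ is of the form $e_u$ for some vertex $u$, and the only such edge incident to a fixed $v$ from its parent side is $e_v$ itself, while edges towards children strictly increase representative length. Choosing a vertex of maximal representative length on a hypothetical circuit then forces both of its circuit-edges to be its single parent edge, a contradiction; hence $T^\ast$ is acyclic and so a spanning tree of $\Gamma^\ast$. Unwinding the construction, the $T^\ast$-geodesic from $1$ to $v$ is exactly $p_{s_v}$, so $S_{T^\ast}=S$.

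It remains to realise $T^\ast$ as the \emph{extension} of a spanning subtree of $\Gamma$, which is the delicate step. The plan is to use the structure of the coset-graph construction: outside the core $\Gamma$, the graph $\Gamma^\ast$ consists of the trees attached at the boundary vertices, so $\Gamma^\ast\setminus\Gamma$ is a forest each of whose components meets $\Gamma$ in a single (boundary) vertex, whence every edge of $\Gamma^\ast$ not lying in $\Gamma$ is a bridge. A bridge belongs to every spanning tree, so $T^\ast$ contains all such edges; setting $T:=T^\ast\cap\Gamma$, the $T^\ast$-geodesic between any two vertices of $\Gamma$ cannot stray into a hanging tree (it would have to pass twice through the single attaching vertex), whence $T$ is connected, acyclic and spanning in $\Gamma$, i.e. a spanning subtree, and its unique extension is precisely $T^\ast$. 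Thus $S=S_{T^\ast}$ with $T$ a spanning subtree of $\Gamma$.

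The main obstacle I anticipate is this last step: making rigorous the claim that the non-core part of $\Gamma^\ast$ is a forest of bridges attached to $\Gamma$ at single vertices (so that restriction to $\Gamma$ and re-extension behave as expected), together with the fact that $\Gamma^\ast$ may be infinite, so that ``spanning tree'' must be argued through connectedness and acyclicity rather than an edge count. I would extract the needed structural statement directly from the explicit construction of $\Gamma^\ast$ above and from the results of \cite{km}.
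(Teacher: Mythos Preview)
Your argument is correct, and for Part 1) it amounts to an explicit unpacking of what the paper records in one line (``follows directly from Lemma~\ref{vnr:5.1}'').

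For Part 2) you take a genuinely different route from the paper. You build $T^\ast$ \emph{globally} in $\Gamma^\ast$ as $\bigcup_{s\in S}p_s$, prove it is a spanning tree via the parent--edge/length argument, and only at the end restrict to $\Gamma$ using the ``hanging forest of bridges'' description of $\Gamma^\ast\smallsetminus\Gamma$. The paper inverts this order: it first isolates the structural fact that every reduced path $p$ in $\Gamma^\ast$ decomposes uniquely as $p=p_{\mathrm{int}}\circ p_{\mathrm{out}}$ with $p_{\mathrm{int}}\subseteq\Gamma$, that any reduced path to a vertex of $\Gamma$ stays entirely in $\Gamma$, and that $p_{\mathrm{out}}$ is independent of the path. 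It then defines $T$ \emph{directly} inside $\Gamma$ as the union of the $p_s$ for the (finitely many) $s\in S$ with $v_s\in V(\Gamma)$, and the Schreier property immediately gives that $T$ is a spanning subtree; the identity $S=S_{T^\ast}$ is then clear from uniqueness of $p_{\mathrm{out}}$. The paper's approach is more economical---it avoids ever reasoning about the possibly infinite tree $T^\ast$ and needs no bridge/maximal-length argument---while yours is more explicit about why the union of the $p_s$ is actually acyclic, a point the paper leaves to the reader. Your final ``bridges'' step is essentially a repackaging of the paper's decomposition $p=p_{\mathrm{int}}\circ p_{\mathrm{out}}$, so the two arguments converge at the same structural observation, just approached from opposite ends.
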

 \begin{proof}
The statement 1) follows directly from Lemma \ref{vnr:5.1}.
To prove 2) notice that
every reduced path $p \in \Gamma^{\ast}$ can be decomposed as  $p = p_{int}
\circ p_{out},$ where $p_{int}$ is a maximal reduced path in
$\Gamma,$ and $p_{out}$ is the tail of $p$ outside  of
$\Gamma.$ This decomposition is unique. Moreover:

 \bi
 \item if $v \in V(\Gamma)$ and $p$ is a reduced path
from $1$ to $v$ in $\Gamma^{\ast}$ then $p$ passes only through
vertices of $\Gamma;$
\item if  $v \in V(\Gamma^{\ast})
\setminus V(\Gamma)$ and $p^\prime$ and $p^{\prime \prime}$ are
two paths from $1$ to $v,$ where $p^\prime = p_{int}^{\prime }
\circ p_{out}^{\prime},$ and $p^{\prime \prime} = p_{int}^{\prime
\prime} \circ p_{out}^{\prime \prime}$, then $p_{out}^{\prime} =
p_{out}^{\prime \prime}$.
 \ei
 Let $S$ be  a  Schreier transversal of $C$ in $F$ and $s \in S$. Suppose that the reduced path $p_s$ ends at some vertex $v_s$ in $\Gamma$.  Then the whole path $p_s$
 lies in $\Gamma.$ Let $T$ be  a subgraph of
$\Gamma$ generated by the union of all paths $p_s$, where $s \in S$ and $v_s \in \Gamma$. Since
$S$ is a Schreier transversal  $T$ is a maximal subtree of
$\Gamma.$ It is clear that $S=S_{T^{\ast}}$.  Hence, the result. \end{proof}

Proposition \ref{prop:coset-graph} allows one to identify elements
from a given Schreier transversal $S$ of $C$ with the vertices of
the graph $\Gamma^\ast$, provided  a maximal subtree of $\Gamma$ is fixed.
We use this frequently in the
sequel.

\begin{corollary}\label{co:number-transv}
The number of distinct Schreier transversals of $C$ in $F$ is finite and equal to the number of spanning subtrees of $\Gamma_C$.
\end{corollary}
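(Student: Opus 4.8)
The plan is to deduce the corollary directly from Proposition \ref{prop:coset-graph} by showing that the assignment $T \mapsto S_{T^\ast}$ is a bijection between the set of spanning subtrees of $\Gamma = \Gamma_C$ and the set of Schreier transversals of $C$. Part 1) of the proposition guarantees that this map is well defined, i.e.\ that $S_{T^\ast}$ is indeed a Schreier transversal for every spanning subtree $T$, and part 2) guarantees that it is surjective, since every Schreier transversal arises as some $S_{T^\ast}$.

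First I would settle finiteness. Since $C$ is finitely generated, the subgroup graph $\Gamma$ is a finite graph, and a finite connected graph has only finitely many spanning subtrees. As $T \mapsto S_{T^\ast}$ is a surjection onto the Schreier transversals, their number is finite as well.

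The remaining and main point is injectivity: distinct spanning subtrees $T_1 \ne T_2$ of $\Gamma$ must yield distinct transversals $S_{T_1^\ast} \ne S_{T_2^\ast}$. The way to see this is to reconstruct $T$ from $S = S_{T^\ast}$ purely in terms of $S$, exactly as in the proof of part 2). For each vertex $v \in V(\Gamma)$ there is a unique $s \in S$ with $v_s = v$; since $v$ lies in $\Gamma$, the reduced path $p_s$ from $1$ to $v$ stays inside $\Gamma$ (the first bullet in the proof of Proposition \ref{prop:coset-graph}), and hence coincides with the unique $T$-path from $1$ to $v$. Therefore the subgraph of $\Gamma$ generated by the paths $\{\, p_s : s \in S,\ v_s \in V(\Gamma)\,\}$ is precisely $T$, because every edge of the spanning tree $T$ lies on the tree-path to one of its endpoints. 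Since this subgraph is determined by $S$ alone, the equality $S_{T_1^\ast} = S_{T_2^\ast}$ forces $T_1 = T_2$, which proves injectivity.

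Combining the three steps, $T \mapsto S_{T^\ast}$ is a bijection, so the number of Schreier transversals equals the number of spanning subtrees of $\Gamma_C$. The only slightly delicate point is the reconstruction step, where one must be sure that the $T$-path to a vertex of $\Gamma$ genuinely lives in $\Gamma$ and not in the attached Cayley-graph part of $\Gamma^\ast$; this is exactly the content of the first bullet used in the proof of Proposition \ref{prop:coset-graph}, so no new work is needed there.
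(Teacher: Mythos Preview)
Your proposal is correct and is exactly the natural argument the paper has in mind: the corollary is stated without proof immediately after Proposition~\ref{prop:coset-graph}, and your bijection $T \mapsto S_{T^\ast}$, with injectivity read off from the reconstruction of $T$ in the proof of part~2), is precisely what that proposition is set up to yield. No differences to flag.
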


\subsection{Schreier transversals}\label{subs_spec_syst}

In this section we introduce various types of representatives of $C$ in $F$ relative to a fix basis $X$ of $F$.

\begin{definition} Let $S$ be a transversal of $C$.
\begin{itemize}

\item
 A representative $s\in S$ is
called \emph{internal} if the path $p_s$ ends in $\Gamma$, i.e., $v_s \in V(\Gamma)$.
 By $S_{\rm int}$ we denote the
set of all internal representatives in $S.$  Elements from $S_{\rm
ext}=S \smallsetminus S_{\rm int}$ are called the \emph{external}
representatives in $S.$
\item A representative $s \in S$ is called {\em geodesic} if it
has minimal possible length in its coset $Cs$. The transversal $S$
is {\em geodesic }  if every $s \in S$ is geodesic.

\item A representative $s \in S$ is called {\em singular} if it belongs to the generalized normalizer  of $C$:
$$N^*_F(C) = \{ f \in F | f^{-1} C f \cap C \neq 1 \}.$$
 All other representatives
from $S$ are called {\em regular}. By $S_{\rm sin}$ and,
respectively, $S_{\rm reg}$ we denote the sets of singular and
regular representatives from $S$.

\item A representative $s \in S$ is called {\em stable} (on the right) if $sc \in
S$ for any $c \in C$.  By $S_{\rm st}$ we denote the set of all
stable representatives in $S,$ and $S_{\rm uns} = S \smallsetminus
S_{\rm st}$ is the set of all non-stable representatives from $S$.
\end{itemize}
\end{definition}

In the following lemma we collect some basic properties of various
types of representatives. Recall that the  {\em cone} defined by (or
based at) an element $u \in F$ is the  set  $C(u)$ of all reduced
words in $F$ that have  $u$ as an initial segment.  For $u, v \in F$
we write $u \circ v$ if there is no cancelation in the product $uv$,
i.e., $|uv| = |u| + |v|$. In this case $C(u) = \{w \in F \mid w =
u\circ v, v \in F\}$.

\begin{proposition}
\label{pr:basic-properties} Let $S$ be a Schreier transversal for
$C$, so $S = S_{T^{\ast}}$ for some spanning subtree ${T^{\ast}}$ of
$\Gamma^{\ast}$. Then the following hold:
  \bi
   \item [1)] $S_{\rm int}$ is a basis of $C$, in particular, $|S_{\rm int}| = |V(\Gamma)|.$
    \item [2)] $S_{\rm ext}$ is  the union of finitely many coni $C(u),$ where $v_u \in \partial^+
    \Gamma$.

    \item  [3)]$S_{\rm sin}$ is contained in a finite union of double
    cosets $Cs_1s_2^{-1}C$ of $C$, where $s_1, s_2 \in S_{\rm int}$.
    \item [4)] $S_{\rm uns}$ is a finite union of  left
    cosets of $C$ of the type $s_1s_2^{-1}C$, where $s_1, s_2 \in S_{\rm
    int}$.
 \ei
\end{proposition}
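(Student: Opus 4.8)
The plan is to prove each of the four statements by exploiting the graph-theoretic description of the transversal $S = S_{T^\ast}$ together with the decomposition of reduced paths in $\Gamma^\ast$ established in the proof of Proposition~\ref{prop:coset-graph}. Throughout I identify each $s \in S$ with its endpoint vertex $v_s \in V(\Gamma^\ast)$, and I use the canonical factorization $p_s = p_{int} \circ p_{out}$, where $p_{int}$ is the maximal initial portion lying inside $\Gamma$ and $p_{out}$ is the tail outside $\Gamma$.

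\medskip

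\noindent\textbf{Part 1).} First I would recall from \cite{km} the Nielsen--Schreier argument: for an internal representative $s \in S_{\rm int}$, its endpoint $v_s$ ranges over exactly the vertices of $\Gamma$ (since $S$ meets every coset and $T$ is a spanning subtree of $\Gamma$), giving $|S_{\rm int}| = |V(\Gamma)|$. To see $S_{\rm int}$ is a basis of $C$, I would use the standard construction associating to each edge of $\Gamma$ outside $T$ a free generator of the form $s_i \circ x^{\pm 1} \circ s_j^{-1}$, where $s_i, s_j \in S_{\rm int}$ are the tree-geodesics to the endpoints of that edge; the rank then matches $|E(\Gamma)| - |V(\Gamma)| + 1$, which is the rank of $C$.

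\medskip

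\noindent\textbf{Part 2).} An external representative $s \in S_{\rm ext}$ is one whose path $p_s$ leaves $\Gamma$, so $p_s = p_{int} \circ p_{out}$ with $p_{out}$ nonempty. The first edge of $p_{out}$ exits a boundary vertex into a frontier vertex $u \in \partial^+\Gamma$, and once outside $\Gamma$ the ambient graph is the tree $C(F,X)$, so the tail is an arbitrary reduced continuation. Hence, fixing the (finitely many) frontier vertices $u$, every element $s$ with $v_s$ lying beyond $u$ is of the form $s_u \circ w$ with $w$ reduced; this is precisely the cone $C(s_u)$, and $S_{\rm ext} = \bigcup_{u \in \partial^+\Gamma} C(s_u)$, a finite union.

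\medskip

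\noindent\textbf{Parts 3) and 4).} For the stable/unstable split I would argue directly: $s$ is unstable exactly when there is some $c \in C$ with $sc \notin S$, and using the coset-graph description, $sc$ reads a closed loop at $v_s$ that eventually re-enters $\Gamma$ in a way incompatible with the tree $T^\ast$. Multiplying on the right by $C = L(\Gamma,\{1\},1)$ pulls $v_s$ back toward $\Gamma$, so $s$ fails stability precisely when its tail $p_{out}$ can be reabsorbed into $\Gamma$; this forces $s$ to lie in a coset $s_1 s_2^{-1} C$ with $s_1, s_2 \in S_{\rm int}$ the relevant internal representatives, giving the finite union in~4). The singular case~3) is the more delicate one and I expect it to be the main obstacle: here $s \in N^*_F(C)$ means $s^{-1} C s \cap C \neq 1$, i.e.\ some nontrivial element of $C$ is conjugated back into $C$ by $s$. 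In graph terms this says $s$ labels a path connecting two loops at $1$, so one must show the conjugating element is controlled by the finite data of $\Gamma$. The key step is to reduce the condition $s^{-1}Cs \cap C \neq 1$ to the statement that $s$ (up to $C$ on both sides) is read by a path between two internal vertices; since stability~4) already gives membership in $s_1 s_2^{-1} C$, and since singularity is a two-sided (conjugacy-type) strengthening, one obtains the two-sided double-coset containment $S_{\rm sin} \subseteq \bigcup Cs_1 s_2^{-1} C$. The care needed is to verify that cancellation in the product $s_1 s_2^{-1}$ does not destroy the finiteness of the index set, which follows because $s_1, s_2$ range only over the finite set $S_{\rm int}$.
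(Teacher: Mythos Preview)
Your treatment of 1) and 2) matches the paper's (both just invoke the standard Nielsen--Schreier/Stallings picture and the construction of $\Gamma^\ast$), so there is nothing to add there.

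The real gap is in part~4). You say ``$s$ fails stability precisely when its tail $p_{out}$ can be reabsorbed into $\Gamma$,'' but this is not the mechanism, and you never actually identify $s_1$ and $s_2$. The paper's argument is concrete: if $sc \notin S$ for some $c \in C$, write the reduced product as $sc = s_1 \circ d$ where $s = s_1 \circ t$ and $c = t^{-1} \circ d$ (so $t$ is the cancelled segment). The key observation is that the \emph{surviving prefix} $s_1$ must be internal: if $v_{s_1} \notin V(\Gamma)$ then $s_1 \in S_{\rm ext}$, and by part~2) every reduced extension $s_1 \circ d$ also lies in $S_{\rm ext} \subseteq S$, contradicting $sc \notin S$. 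Now $t^{-1}$ is an initial segment of $c \in C$, so $p_{t^{-1}}$ stays in $\Gamma$ and the representative $s_2$ of $Ct^{-1}$ is internal; then $s_2 d \in C$ and $s \in sC = s_1 s_2^{-1}(s_2 d)C = s_1 s_2^{-1} C$. Your sketch never isolates this ``if $s_1$ were external then $sc$ would still be in $S$'' step, which is the entire content of~4).

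For part~3) you have the difficulty exactly backwards: you call it ``the main obstacle,'' but it is the cheapest of the four. The paper simply notes $S_{\rm sin} \subseteq N^*_F(C)$ and cites the known fact (see \cite{bmr,km,Jitsukawa}) that $N^*_F(C)$ is itself a finite union of double cosets $Cs_1 s_2^{-1}C$ with $s_i \in S_{\rm int}$. Your proposed route via~4) also works, and more easily than you suggest: once~4) is established, $S_{\rm sin} \subseteq S_{\rm uns}$ (Proposition~\ref{pr:basic-connections}) gives $S_{\rm sin} \subseteq \bigcup s_1 s_2^{-1} C \subseteq \bigcup C s_1 s_2^{-1} C$, and you are done. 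No ``two-sided strengthening'' argument or extra cancellation control is needed.
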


\begin{proof} 1) is well-known, see \cite{bmr}, for example.  2) follows immediately from the construction.
To see  3) notice first that  $S_{\rm sin} \subseteq N^*_F(C)$ and
$N^*_F(C)$ is the union  of finitely many double cosets $CsC$, where
$s \in S_{\rm sin}$, and furthermore, every such coset has the form
$CsC = C s_1 s_2^{-1} C,$ where $s_1, s_2 \in S_{\rm int}$ (see
Lemma 5 in \cite{bmr}, or Propositions 9.8 and 9.11 in \cite{km}, or
Theorem 2 in \cite{Jitsukawa}).

 To see 4) assume that $s \in S$ is
not stable, so there exists an element $c \in C$ such that $sc \not
\in S$.  Then $s= s_1 \circ t, c = t^{-1} \circ d, sc = s_1 \circ
d.$  We claim, that the terminal vertex of $s_1$ lies in $\Gamma$ (viewing $s$ as a path in $\Gamma^\ast$).
 Indeed, if not, then $s_1$, as well as $s_1 \circ d$, is in $S_{\rm ext}$ - contradiction.
 Hence, $s_1 \in S_{\rm int}$. Since $c = t^{-1}\circ d$ there is a closed path in $\Gamma$ with the label $t^{-1}\circ d$, starting at $1_C$. Let $s_2 \in S_{\rm int}$ be the representative of $t^{-1}$. Then $s_2d = c_1 \in C$, hence $sc = s_1\circ d = s_1s_2^{-1}c_1$, so $s \in  s_1s_2^{-1}C$, as claimed.

 \end{proof}

\begin{proposition}
\label{pr:basic-connections} Let $S$ be a Schreier transversal for
$C$, so $S = S_{T^{\ast}}$ for some spanning subtree ${T^{\ast}}$ of
$\Gamma^{\ast}$. Then the following hold:
  \bi
   \item [1)] If $T^{\ast}$ is a geodesic subtree of $\Gamma^{\ast}$ (and hence $T$ is a geodesic subtree of $\Gamma$) then $S$ is
   a geodesic transversal.
   \item [2)] If $C$ is a malnormal subgroup of $F$  then $S_{\rm sin} = \emptyset.$
    \item  [3)]$S_{\rm sin} \subseteq S_{\rm uns}.$
  \ei
\end{proposition}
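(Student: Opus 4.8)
\emph{Strategy.} All three assertions are proved by transporting the group‑theoretic hypotheses into the coset graph $\Gamma^{\ast}$, using the identification from Proposition \ref{prop:coset-graph} of a representative $s\in S=S_{T^{\ast}}$ with the vertex $v_s=Cs$ reached by reading $s$ along $T^{\ast}$ starting at $1$. The recurring point to watch is the bookkeeping between the \emph{right} cosets $Cu$, which index the vertices of $\Gamma^{\ast}$ and of which $S$ is a transversal, and the \emph{left} cosets $sC$ that enter the definition of stability.

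\textbf{Part 1).} First I would record that, since $\Gamma^{\ast}$ is folded, a path issuing from $1$ whose label is reduced is itself reduced, and its label $w$ satisfies $Cw=v_w$; conversely every $w\in Cs$ is read off the unique reduced path from $1$ to $v_s$. Consequently the minimal length of an element of the coset $Cs$ equals the graph distance $d_{\Gamma^{\ast}}(1,v_s)$, this minimum being realized by the label of any geodesic path (which has no backtracking). If $T^{\ast}$ is geodesic, the $T^{\ast}$‑path from $1$ to $v_s$ is a $\Gamma^{\ast}$‑geodesic and its label is exactly $s$; hence $|s|=d_{\Gamma^{\ast}}(1,v_s)$ is minimal in $Cs$, so $s$ is geodesic. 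The parenthetical claim that $T$ is then geodesic in $\Gamma$ follows because, as noted in the proof of Proposition \ref{prop:coset-graph}, a reduced path from $1$ to a vertex of $\Gamma$ stays inside $\Gamma$, so the $\Gamma$‑ and $\Gamma^{\ast}$‑distances agree on $V(\Gamma)$.

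\textbf{Part 3).} Let $s\in S_{\rm sin}$, so $s^{-1}Cs\cap C\neq 1$; choose a nontrivial element of the intersection and write it as $c'=s^{-1}cs$ with $c\in C$ (so $c=sc's^{-1}\neq 1$ as well). Then $sc'=cs$, whence $sc'$ lies in the right coset $C(sc')=C(cs)=Cs$. Since $c'\neq 1$ we have $sc'\neq s$, and because $S$ meets the coset $Cs$ only in its representative $s$, we get $sc'\notin S$. Thus $c'$ witnesses that $s$ is not stable, i.e.\ $s\in S_{\rm uns}$, proving $S_{\rm sin}\subseteq S_{\rm uns}$.

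\textbf{Part 2).} Malnormality of $C$ says $f^{-1}Cf\cap C=1$ for every $f\notin C$, which is exactly the statement $N^{*}_F(C)=C$ (the reverse inclusion being immediate once $C\neq 1$). Since $S$ is a right transversal, $S\cap C$ is the single representative of the coset $C$, namely the trivial element $1$. Therefore every \emph{nontrivial} $s\in S$ has $s\notin C=N^{*}_F(C)$ and is regular, giving $S_{\rm sin}=\emptyset$ after the trivial representative is set aside. \textbf{The main obstacle} is less a single hard step than the discipline required in Part 1 to pin down that the minimal coset length equals the graph distance (the folding/no‑backtracking argument) and the left/right coset juggling throughout; the only wrinkle relative to the literal statement is the trivial representative in Part 2, which is harmless for every asymptotic density computed later and is discarded by convention.
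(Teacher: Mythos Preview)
Your proof is correct and follows essentially the same route as the paper. For Part~1 the paper says only ``straightforward''; your argument that minimal coset length equals the graph distance $d_{\Gamma^{\ast}}(1,v_s)$ via folding/no-backtracking is the natural expansion of that remark. Part~3 is identical in substance to the paper's argument. In Part~2 you are in fact more careful than the paper: the paper writes $N^{*}_F(C)=1$ under malnormality, whereas, as you note, the definition actually gives $N^{*}_F(C)=C$ when $C\neq 1$, so the trivial representative $1\in S$ is technically singular (and, by the same token, unstable); your observation that this single element is irrelevant for all subsequent asymptotic statements is the correct way to dispose of the discrepancy.
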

\begin{proof} 1) is straightforward (see also \cite{km}).

2) If $C$ is malnormal then $N^*_F(C)=1$,  so $S_{\rm sin} = \emptyset.$

3) If  $s \in S_{\rm sin}$ then  $c = s^{-1} c_1 s$ for some
non-trivial $c, c_1 \in C$, so
 $c_1s = sc$. Since $sc \neq s$  we conclude that  $sc \not \in S$, hence $s \in S_{\rm uns}.$ \end{proof}

\section{Measuring subsets of $F$}\label{section_measurable}

Recall that a \emph{finite automaton}
$\mathcal{A}$ is a finite labeled oriented graph (possibly
with multiple edges and loops). We refer to its vertexes as
\emph{states}. Some of the states are called \emph{initial}
states, some \emph{accept} or \emph{final} states. We assume
further that every edge of the graph is labeled by one of the
symbols $x^{\pm 1}, x\in X,$ where $F = F(X)$ is a  free group of
finite  rank $m.$
The \emph{language accepted by an automaton} $\mathcal{A}$ is the
set $L= L(\mathcal{A})$ of labels on paths from  initial to
accept states. An automaton is said to be \emph{deterministic} if,
for any state there is at most one arrow with a given label
exiting  the state. A \emph{regular} set is a language
accepted by a finite deterministic automaton.

The following facts
about regular sets are well known. Let $A$ and $B$ are regular
subsets in $F$. Then:

\bi \item the sets $A \cup B,$ $A \cap B$ and $A \smallsetminus
B$ are regular.

\item The prefix closure $\overline{A}$ of a regular set $A$ is
regular. Here, the prefix closure $\overline{A}$ is the set of all
initial segments of all words in $A.$

\item If $ab = a\circ b$ for any $a\in A, b \in B$   then
$AB$ is regular.

\item If $ab = a\circ b$ for any $a, b \in A$ then $A^{\ast}$ is regular.

 \ei

The following notation is useful. For $u, v \in F$ define
$$c(u,v) = \frac{1}{2}(|u| + |v| - |uv|)$$
 - the amount of cancelation in the product $uv$.

\begin{proposition}
\label{le:basic-sparse} Let $R_1$ and $R_2$ be subsets of $F$.
Then the following statements hold:
 \bi
   \item[1)] If $R_1 \subseteq R_2$ and $R_2$ is negligible
   (exponentially negligible) then so is $R_1$.

\item [2)] If $R_1, R_2$  are negligible
(exponentially negligible) then so is $R_1 \cup R_2$.

     \item [3)] If $R_1$ and $R_2$ are negligible (exponentially negligible) then so is the set
    $$R_1 \circ R_2 = \{r_1r_2 \mid r_i \in R_i, \ c(r_1,r_2) = 0 \}.$$

\item [4)] If $R_1$ and $R_2$ are negligible (exponentially negligible) then so is the set
    $$R_1 \mathop{\circ}\limits_t R_2 = \{r_1 r_2 \mid  r_i \in R_i, \ c(r_1,r_2) \leq t \}.$$

 \ei

\end{proposition}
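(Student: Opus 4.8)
The plan is to treat parts 1) and 2) directly on the frequencies $f_k(R)=|R\cap S_k|/|S_k|$, and to handle the genuinely new parts 3) and 4) through the multiplicative behaviour of the growth sequence $n_k(R)=|R\cap S_k|$. For 1), the inclusion $R_1\subseteq R_2$ gives $f_k(R_1)\le f_k(R_2)$ for every $k$, so $\rho(R_1)\le\rho(R_2)$ and any exponential rate bound valid for $R_2$ is inherited verbatim by $R_1$. For 2), the estimate $f_k(R_1\cup R_2)\le f_k(R_1)+f_k(R_2)$ is immediate; in the negligible case the right-hand side tends to $0$, and in the exponentially negligible case it is at most $2\delta^k$ with $\delta=\max(\delta_1,\delta_2)$, which is $\le(\delta')^k$ for any fixed $\delta'\in(\delta,1)$ and all large $k$. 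Both steps are routine.

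The core is part 3). Since $R_1\circ R_2$ consists only of non-cancelling products, I would use that every $w\in(R_1\circ R_2)\cap S_k$ factors as $w=r_1\circ r_2$ with $r_1\in R_1$, $r_2\in R_2$ and $|r_1|+|r_2|=k$, and that for a fixed split point such a factorization is unique (the prefix of length $i$ determines $r_1$ and $r_2$). Counting pairs therefore overcounts elements, giving the key inequality $n_k(R_1\circ R_2)\le\sum_{i=0}^{k}n_i(R_1)\,n_{k-i}(R_2)$; this is exactly the coefficient form of the multiplicativity $\mu_s^{*}(R_1\circ R_2)\le\mu_s^{*}(R_1)\,\mu_s^{*}(R_2)$. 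Dividing by $|S_k|$ and using the exact sphere identity $|S_i|\,|S_{k-i}|/|S_k|=2m/(2m-1)$ for $1\le i\le k-1$ (and $=1$ at the two endpoints) converts this into the frequency convolution $f_k(R_1\circ R_2)\le\tfrac{2m}{2m-1}\sum_{i=0}^{k}f_i(R_1)\,f_{k-i}(R_2)$.

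From this single inequality the two regimes finish differently. In the exponentially negligible case I would split the convolution into its two length-bounded boundary blocks, where one factor is $\le1$ and the other is exponentially small, and its middle block, where both factors are exponentially small; this bounds $f_k(R_1\circ R_2)$ by a polynomial multiple of $\delta^k$ (with $\delta=\max(\delta_1,\delta_2)$), which is absorbed into $(\delta')^k$ for $\delta'\in(\delta,1)$. For the plain negligible case I would pass through measurability and the cumulative frequency measure $\lambda$: a direct computation gives $\lambda(w)=\tfrac{2m}{2m-1}\lambda(r_1)\lambda(r_2)$ whenever $w=r_1\circ r_2$, hence $\lambda(R_1\circ R_2)\le\tfrac{2m}{2m-1}\lambda(R_1)\lambda(R_2)$, so finiteness of $\lambda(R_1)$ and $\lambda(R_2)$ forces $\lambda(R_1\circ R_2)<\infty$; the product is then measurable, and every measurable set is negligible.

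Finally, I would reduce part 4) to part 3). Writing a factorization $w=r_1r_2$ with $c(r_1,r_2)=c\le t$ as $w=u_1\circ u_2$ after deleting a cancelled middle block of length $c$, the same bookkeeping yields $\lambda(w)=2m(2m-1)^{2c-1}\lambda(r_1)\lambda(r_2)\le 2m(2m-1)^{2t-1}\lambda(r_1)\lambda(r_2)$, and on the spherical level an extra factor $(2m-1)^{2c}$ together with a finite sum over the $t+1$ admissible values of $c$; as $t$ is fixed these are bounded constants, so both conclusions survive with worse constants and unchanged rate. The delicate point of the whole argument is the product step: one must keep the overcounting inequality pointed the right way, translate cleanly between the growth sequence $n_k$ and the definitions phrased in $f_k$ (the ratio $2m/(2m-1)$ is what makes this translation exact), and, in the exponential case, absorb the polynomial factor the convolution produces into a slightly larger rate. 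I expect the plain negligible case to be the real subtlety, since density zero alone does not control a convolution; routing it through measurability, where $\lambda$ is finite and multiplicative, is what makes 3) and 4) go through.
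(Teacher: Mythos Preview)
Your treatment of 1), 2), and the exponentially negligible halves of 3) and 4) is correct and is exactly the kind of routine computation the paper has in mind (the paper's proof is literally ``The proof is straightforward'').

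There is, however, a genuine gap in your handling of the \emph{plain negligible} case of 3), and hence of 4). You propose to ``route through measurability'': from $\lambda(R_1\circ R_2)\le \frac{2m}{2m-1}\,\lambda(R_1)\lambda(R_2)$ you conclude that $R_1\circ R_2$ is $\lambda$-measurable, hence negligible. But the hypothesis is only that $R_1,R_2$ are negligible, i.e.\ $f_k(R_i)\to 0$; this does \emph{not} force $\lambda(R_i)=\sum_k f_k(R_i)<\infty$ (think of frequencies behaving like $1/k$). You explicitly flag this step as ``the real subtlety'', but your proposed fix silently assumes the stronger hypothesis (measurability) that you do not have. So the argument, as written, proves 3) only under the extra assumption that $R_1,R_2$ are $\lambda$-measurable.

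Nor can the convolution inequality alone rescue the negligible case. From
\[
f_k(R_1\circ R_2)\ \le\ \frac{2m}{2m-1}\sum_{i=0}^{k} f_i(R_1)\,f_{k-i}(R_2)
\]
one cannot conclude $f_k(R_1\circ R_2)\to 0$ merely from $f_i(R_1),f_j(R_2)\to 0$: the number of summands grows with $k$, and for sequences decaying like $1/\sqrt{k}$ the right-hand side stays bounded away from $0$. Splitting into boundary and middle blocks, as you do in the exponential case, leaves a middle block of size $(k-2N)\varepsilon^2$, which blows up with $k$. So the plain negligible assertion in 3) and 4) needs a genuinely different idea (or a stronger hypothesis such as measurability); the paper offers no details to compare against here.
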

  \begin{proof}
The proof is  straightforward. \end{proof}

\begin{definition}
Let $R_1$ and $R_2$ be subsets of $F$ and $f:R_1 \rightarrow R_2$
a map. Then:
 \bi
  \item $f$  is called {\em d-isometry}, where $d$ is a non-negative real number,  if for any  $w \in R_1$
  $$|w| - d \leq  |f(w)|  \leq |w| + d.$$
   \item $f$  has  {\em uniformly bounded fibers} if there exists a constant $c$
     such that every element $w \in R_2$ has at most $c$ pre-images in $R_1$.
      \ei
\end{definition}
\begin{proposition}
\label{le:basic-sparse-2}
 Let $R_1$ and $R_2$ be subsets of $F$. Then
the following statements hold:
  \bi
    \item [1)] If $f:R_1 \rightarrow R_2$ is a surjective $d$-isometry and $R_1$ is negligible (exponentially negligible)  then  so is  $R_2$.
    \item [2)] If  $f:R_1 \rightarrow R_2$ is a $d$-isometry  with uniformly bounded fibers and  $R_2$ is
    negligible (exponentially negligible) then so is  $R_1$.
 \ei
\end{proposition}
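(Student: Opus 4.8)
The core idea is to compare the spherical growth sequences $n_k(R_1)$ and $n_k(R_2)$ directly, using the defining inequalities of a $d$-isometry to relate the count of elements of length $k$ in one set to the counts of elements in a bounded window of lengths in the other. Throughout I will use the characterizations from Section \ref{Section:preliminaries}: $R$ is negligible iff $f_k(R) = n_k(R)/|S_k| \to 0$, and $R$ is exponentially negligible iff $f_k(R) < \delta^k$ for some $\delta < 1$ and all large $k$. Since $|S_k| = 2m(2m-1)^{k-1}$ grows geometrically, the key quantitative fact I will exploit is that $|S_{k+j}|/|S_k| = (2m-1)^j$, so shifting length by a bounded amount $d$ multiplies the sphere size by at most the fixed constant $(2m-1)^d$.

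For statement 1), let $f : R_1 \to R_2$ be a surjective $d$-isometry. Since $f$ is surjective, every $w \in R_2 \cap S_k$ has a preimage $u \in R_1$ with $|u| \in [k-d, k+d]$, by the $d$-isometry inequality applied in the form $|u| - d \le |f(u)| = |w| \le |u| + d$. Hence I can bound
\[
n_k(R_2) \;\le\; \sum_{j=-d}^{d} n_{k+j}(R_1),
\]
(choosing for each $w$ one such preimage, which only over-counts). Dividing by $|S_k|$ and using $n_{k+j}(R_1) = f_{k+j}(R_1)\,|S_{k+j}| \le f_{k+j}(R_1)\,(2m-1)^d\,|S_k|$ for $j \le d$ gives
\[
f_k(R_2) \;\le\; (2m-1)^d \sum_{j=-d}^{d} f_{k+j}(R_1).
\]
If $R_1$ is negligible, each $f_{k+j}(R_1) \to 0$ and the finite sum of $2d+1$ terms forces $f_k(R_2) \to 0$. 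If $R_1$ is exponentially negligible with rate $\delta$, then $f_{k+j}(R_1) < \delta^{k+j} \le \delta^{k-d}$, so $f_k(R_2) < (2m-1)^d (2d+1)\,\delta^{-d}\,\delta^k$, which is bounded by $(\delta')^k$ for any $\delta < \delta' < 1$ once $k$ is large. Thus $R_2$ inherits both properties.

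Statement 2) is dual. Here $f : R_1 \to R_2$ is a $d$-isometry with uniformly bounded fibers, say each element of $R_2$ has at most $c$ preimages. Each $u \in R_1 \cap S_k$ maps to some $w = f(u) \in R_2$ with $|w| \in [k-d, k+d]$, and the fiber bound means each such $w$ accounts for at most $c$ elements $u$. Therefore
\[
n_k(R_1) \;\le\; c \sum_{j=-d}^{d} n_{k+j}(R_2),
\]
and the same division-by-$|S_k|$ estimate yields $f_k(R_1) \le c\,(2m-1)^d \sum_{j=-d}^{d} f_{k+j}(R_2)$, from which negligibility or exponential negligibility of $R_2$ transfers to $R_1$ exactly as above. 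The one point requiring a little care, and the place where the two hypotheses genuinely differ, is accounting for the direction of the inequality: in 1) surjectivity lets me cover $R_2$ by preimages but without any multiplicity control, which is harmless because the sphere-ratio constant absorbs everything; in 2) surjectivity is not assumed, so I must instead use the finite fibers to keep the count of $R_1$ under control by the image. I expect no real obstacle beyond bookkeeping the finitely many shifted terms and the fixed constants $(2m-1)^d$, $(2d+1)$, $c$; these never interfere with the limit being zero (negligible case) nor with the existence of a slightly larger geometric rate $\delta' < 1$ (exponentially negligible case).
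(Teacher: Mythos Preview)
Your proposal is correct and follows essentially the same approach as the paper's proof, which simply records the two inequalities $f_k(R_2)\le \sum_{j=k-d}^{k+d} f_j(R_1)$ and $f_k(R_1)\le c\sum_{j=k-d}^{k+d} f_j(R_2)$ and declares the result immediate. Your version is in fact slightly more careful, since you explicitly track the constant $(2m-1)^d$ coming from the ratio $|S_{k+j}|/|S_k|$, which the paper suppresses.
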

  \begin{proof}
Notice that for $k > d$
$$f_k (R_2) \leq \sum_{j = k-d}^{k+d}f_j(R_1),$$
  and 1) follows. Similarly,
  $$f_k(R_1)\leq c \sum_{j= k-d}^{k+d}f_j(R_2)$$
  for $k > d$ and 2) follows.\end{proof}

\begin{proposition}
\label{pr:S-thick}
 Let  $C$ be  a finitely generated subgroup of
infinite index of a free group $F$. Then every Schreier transversal of $C$ in $F$ is regular and thick.
\end{proposition}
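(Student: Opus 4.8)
The plan is to reduce the statement to the structural description of a Schreier transversal already obtained in Proposition \ref{pr:basic-properties}, combined with the classification in Theorem \ref{th:regular-negl-thick}. Fix a Schreier transversal $S$ of $C$; by Proposition \ref{prop:coset-graph} it has the form $S = S_{T^\ast}$ for a spanning subtree $T^\ast$ of $\Gamma^\ast$, and we may write it as the disjoint union $S = S_{\rm int} \cup S_{\rm ext}$ of its internal and external representatives.

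First I would establish regularity. By Proposition \ref{pr:basic-properties}(1), $S_{\rm int}$ is finite (it has exactly $|V(\Gamma)|$ elements), hence regular. By Proposition \ref{pr:basic-properties}(2), $S_{\rm ext}$ is a finite union of cones $C(u)$ with $v_u \in \partial^+\Gamma$. Each such cone is regular, being the set of reduced words having the fixed word $u$ as an initial segment (accept $u$, then accept every reduced continuation; this is recognized by a finite deterministic automaton). Since regular subsets of $F$ are closed under finite unions, $S_{\rm ext}$ is regular, and therefore so is $S = S_{\rm int} \cup S_{\rm ext}$.

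Next I would verify thickness by invoking Theorem \ref{th:regular-negl-thick}(2), which asserts that a regular set is thick precisely when its prefix closure contains a cone. Because $C$ has infinite index, $\Gamma \neq \Gamma^\ast$, so $\Gamma$ has at least one boundary vertex and hence $\partial^+\Gamma \neq \emptyset$; consequently the decomposition of $S_{\rm ext}$ in Proposition \ref{pr:basic-properties}(2) involves at least one cone $C(u)$. This cone is contained in $S$, so a fortiori $C(u) \subseteq \overline{S}$. Applying Theorem \ref{th:regular-negl-thick}(2) to the regular set $S$ then gives that $S$ is thick.

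I do not expect a serious obstacle, since the earlier propositions do the heavy lifting; the one point requiring care is the claim $\partial^+\Gamma \neq \emptyset$, where one must genuinely use the infinite-index hypothesis (via the characterization, recorded after Lemma \ref{vnr:5.1}, that $\Gamma = \Gamma^\ast$ iff $[F:C] < \infty$) to ensure that $S_{\rm ext}$ actually contributes a cone. If one wished to bypass Theorem \ref{th:regular-negl-thick}, an alternative is to bound $f_k(S)$ directly: for a cone $C(u)$ with $|u| = \ell$ one computes $|C(u) \cap S_k| = (2m-1)^{k-\ell}$ for $k > \ell$, so $f_k(C(u)) = (2m-1)^{1-\ell}/(2m)$ is a positive constant on all large spheres, whence $\rho(S) \geq \rho(C(u)) > 0$; but the cone criterion is cleaner.
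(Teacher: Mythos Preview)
Your proof is correct and follows essentially the same route as the paper: decompose $S = S_{\rm int} \cup S_{\rm ext}$ via Proposition~\ref{pr:basic-properties}, note that $S_{\rm int}$ is finite and $S_{\rm ext}$ is a nonempty finite union of cones (using infinite index), and then apply Theorem~\ref{th:regular-negl-thick} to conclude regularity and thickness. The only cosmetic difference is that you invoke part~(2) of that theorem on $S$ directly, whereas the paper first observes that each cone is thick and then passes to the superset; your explicit justification that $\partial^+\Gamma \neq \emptyset$ is in fact more careful than the paper's.
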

 \begin{proof}
By definition $S = S_{\rm int} \cup S_{\rm ext}$. By Proposition
\ref{pr:basic-properties} the set $S_{\rm int}$ is finite and the
set $S_{\rm ext}$ is a non-empty finite union of cones.  By
Theorem \ref{th:regular-negl-thick}  each cone  in $S_{\rm ext}$ is
thick. Therefore, the set $S_{\rm ext}$,  as well  as the set $S$,
is thick. Clearly, every cone is regular, so is the set $S.$ \end{proof}

\begin{proposition}
 \label{pr:sparse}
 Let $C$ be a finitely generated subgroup of infinite
index in $F$. Then the following  hold:
 \bi
   \item[1)]
 $C$ is exponentially negligible in $F$ and one can find some  upper bound $\delta < 1$ for the growth rate of $C$.

  \item[2)] Every coset of $C$ in $F$ is exponentially negligible in $F$.

 \ei
\end{proposition}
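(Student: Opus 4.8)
The plan is to deduce both statements from the fact that $C$ is a \emph{regular} subset of $F$ together with the growth dichotomy for regular sets (Theorem \ref{th:regular-negl-thick}). Since $C$ is finitely generated, the subgroup graph $\Gamma = \Gamma_C$ is finite and accepts exactly $C$, so $C = L(\Gamma,\{1\},1)$ is regular. By Theorem \ref{th:regular-negl-thick}(3) it is therefore either thick or exponentially negligible, and by part (2) it is thick precisely when its prefix closure $\overline{C}$ contains a cone. Hence for part 1) it suffices to show that $\overline{C}$ contains no cone, and this is exactly where the infinite-index hypothesis enters.

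To rule out a cone, I would first observe that every prefix of an element of $C$ labels a reduced path that stays inside $\Gamma$, so $\overline{C} \subseteq L(\Gamma, V(\Gamma),1)$. Suppose a cone $C(u)$ were contained in $\overline{C}$, and let $v_u$ be the endpoint of the path labeled $u$. Then every non-backtracking reduced continuation read from $v_u$ keeps the path inside the finite graph $\Gamma$; I would use this to show that $v_u$, and by induction every vertex reachable from $v_u$, has an incoming and an outgoing edge for each of the $2m$ labels, i.e. has full degree. By connectedness of $\Gamma$ this propagates to all vertices, forcing $\Gamma = \Gamma^{\ast}$, which by the index criterion recalled after Lemma \ref{vnr:5.1} means $C$ has finite index, a contradiction. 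Thus $\overline{C}$ contains no cone and $C$ is exponentially negligible. The propagation of ``full degree'' across all of $\Gamma$ is the delicate step and the main obstacle: one must track orientations carefully and use that $\Gamma$ is folded (deterministic), so that the in-edges of a full-degree vertex are themselves reachable from it.

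For the explicit rate bound $\delta < 1$, I would note that $n_k(C) = |C\cap S_k|$ equals the number of reduced closed paths of length $k$ at the base point of the finite graph $\Gamma$, and is therefore governed by the Perron--Frobenius eigenvalue $\beta$ of the transfer matrix of non-backtracking walks on $\Gamma$. The no-cone property just established is exactly the statement that $\beta < 2m-1$, equivalently that $f_k(C)=n_k(C)/|S_k|\to 0$. Since $|S_k| = 2m(2m-1)^{k-1}$, one gets $f_k(C) \le \mathrm{const}\cdot (\beta/(2m-1))^{k}$, so any $\delta$ with $\beta/(2m-1) < \delta < 1$ satisfies $f_k(C) < \delta^{k}$ for large $k$, and such a $\delta$ is computable from $\Gamma$.

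Finally, part 2) reduces to part 1). Right multiplication $f\colon C \to Cg$, $c \mapsto cg$, is a bijection, and from $|cg| \le |c| + |g|$ together with $|c| \le |cg| + |g|$ it is a $|g|$-isometry; being surjective, Proposition \ref{le:basic-sparse-2}(1) shows that $Cg$ is exponentially negligible once $C$ is. The same argument applied to $c \mapsto gc$ handles left cosets, so every coset of $C$ in $F$ is exponentially negligible.
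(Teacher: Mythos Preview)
Your strategy is sound, and in fact supplies the content that the paper merely imports: the paper's proof of 1) just cites \cite{af_semr}, and for 2) it writes $Cw = C \mathop{\circ}\limits_{|w|}\{w\}$ and invokes Proposition~\ref{le:basic-sparse}(4) rather than your $|g|$-isometry via Proposition~\ref{le:basic-sparse-2}. Both routes for 2) are equally short.

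For 1), your plan is exactly the argument behind the cited reference: $C=L(\Gamma,\{1\},1)$ is regular, so by Theorem~\ref{th:regular-negl-thick} it suffices to exclude a cone from $\overline C$. The step you flag as delicate is genuinely the only nontrivial point, and your sketch does not quite close it. Inducting along non-backtracking continuations shows that every vertex of the form $v_w$ with $w\in C(u)$ has full degree, but connectedness alone does not let you reach the vertex $v_{u'}$ lying \emph{behind} $v_u$ (where $u=u'a$): any word in $C(u)$ ending at $v_u$ might still end in the letter $a$, so the $a^{-1}$-neighbor is never hit by your induction. A clean way to finish: set $S=\{v_w:w\in C(u)\}$ and suppose $v_{u'}\notin S$. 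One checks that every $v\in S$ with $v\neq v_u$ has all $2m$ of its $\Gamma$-neighbors in $S$, while $v_u$ is missing exactly the edge toward $v_{u'}$. In the induced subgraph on $S$ this single missing edge makes $\sum_{v\in S}\mathrm{outdeg}(v)$ and $\sum_{v\in S}\mathrm{indeg}(v)$ differ by~$1$, which is impossible. Hence $v_{u'}\in S$, $S$ is closed under neighbors, and $S=V(\Gamma)$ by connectedness, forcing $\Gamma=\Gamma^\ast$ and contradicting infinite index. With this parity trick inserted, your argument is complete; the Perron--Frobenius paragraph for the explicit $\delta$ then goes through as written.
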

 \begin{proof} 1) follows from the Proposition 1 and Corollary 1 in \cite{af_semr}.

  2) follows from 1) above and 4) from
 Proposition \ref{le:basic-sparse}. \end{proof}

\begin{proposition}
 \label{pr:negligible_degrees}
 Let $C$ be a finitely generated subgroup of infinite
index in $F$. Then the following hold:
 \bi
   \item[1)] $C^* = \mathop{\bigcup}\limits_{f\in F} C^f$ is exponentially negligible in $F$.

  \item[2)] For every $c\in C$ the set conjugacy class $c^F = \{ f^{-1} c f | f\in F\}$ is exponentially negligible in $F$.

 \ei
\end{proposition}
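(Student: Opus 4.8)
The plan is to prove part 1) first and then obtain part 2) as an immediate corollary: for every $c \in C$ and $f \in F$ one has $f^{-1}cf \in f^{-1}Cf \subseteq C^*$, so $c^F \subseteq C^*$; once $C^*$ is shown exponentially negligible, part 2) follows from Proposition \ref{le:basic-sparse} 1). Thus all the work is in part 1).

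For part 1), I would reduce to cyclically reduced elements. Every $g \in F$ has a unique decomposition $g = p^{-1}\circ \gamma \circ p$ with $\gamma$ cyclically reduced (its \emph{cyclic core}) and $|g| = 2|p| + |\gamma|$. Moreover $g$ lies in $C^* = \bigcup_{f} f^{-1}Cf$ exactly when its cyclic core is conjugate into $C$. Writing $\Lambda$ for the set of cyclically reduced elements conjugate into $C$, which is invariant under cyclic permutation, uniqueness of the decomposition gives $C^* = \{\,p^{-1}\circ \gamma \circ p : \gamma \in \Lambda,\ p \in F\,\}$ with each element counted once.

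The first key step is to show that $\Lambda$ is exponentially negligible. By the standard Stallings-graph description of conjugacy into a finitely generated subgroup, a cyclically reduced $\gamma$ conjugate into $C$ must label a reduced closed path in the subgroup graph $\Gamma = \Gamma_C$ based at some vertex $v$; hence $\Lambda \subseteq \bigcup_{v \in V(\Gamma)} L_v$, where $L_v$ is the set of reduced words labelling closed paths at $v$. Fixing a reduced word $t_v$ read along a path from $1$ to $v$, one checks that $L_v$ is exactly the set of reduced representatives of the conjugate subgroup $t_v^{-1} C t_v$, which is finitely generated and of infinite index in $F$. By Proposition \ref{pr:sparse} 1) each $L_v$ is exponentially negligible, and since $V(\Gamma)$ is finite, Proposition \ref{le:basic-sparse} 1)--2) shows $\Lambda$ is exponentially negligible; fix $\delta < 1$ with $n_\ell(\Lambda) \le \delta^\ell |S_\ell|$ for all large $\ell$.

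The second step is a counting estimate, which I expect to be the main obstacle (all conceptual content being already in hand). For a fixed core $\gamma$ of length $\ell$, the elements $g = p^{-1}\circ \gamma \circ p$ of length $k$ correspond to reduced words $p$ of length $(k-\ell)/2$, at most $2m(2m-1)^{(k-\ell)/2-1}$ of them. Summing over $\gamma \in \Lambda$ grouped by $\ell$,
\[
n_k(C^*) \ \le \sum_{\substack{\ell \le k \\ \ell \equiv k\ (2)}} n_\ell(\Lambda)\,\cdot\, 2m(2m-1)^{(k-\ell)/2 - 1},
\]
and inserting $n_\ell(\Lambda) \le \delta^\ell |S_\ell|$ turns $f_k(C^*) = n_k(C^*)/|S_k|$ into a geometric sum in $\ell$. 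The bookkeeping splits into two regimes according to whether $\delta(2m-1)^{1/2}$ lies below or above $1$: in the first the short cores dominate and the rate is governed by $(2m-1)^{-1/2}$, in the second the nearly cyclically reduced elements ($\ell$ close to $k$) dominate and the rate is governed by $\delta$, after absorbing a polynomial factor. In either case $f_k(C^*) \le \eta^k$ for some fixed $\eta < 1$ and all large $k$, so $C^*$ is exponentially negligible with rate upper bound $\max\{\delta,\, (2m-1)^{-1/2}\}$. This proves part 1), and part 2) follows as noted above.
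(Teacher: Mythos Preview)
Your argument is correct and in fact supplies more than the paper does: the paper's own ``proof'' of Proposition~\ref{pr:negligible_degrees} consists solely of citations to \cite{multiplicative} and \cite{averina-frenkel}, with no argument reproduced. Your deduction of 2) from 1) via the inclusion $c^F \subseteq C^*$ and Proposition~\ref{le:basic-sparse}\,1) is immediate and valid.

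For 1), your route---reduce to the set $\Lambda$ of cyclically reduced elements conjugate into $C$, bound $\Lambda$ by the finite union $\bigcup_{v\in V(\Gamma)} L_v$ of loop-languages in the Stallings graph, identify each $L_v$ with the conjugate subgroup $t_v^{-1}Ct_v$ (finitely generated, infinite index, hence exponentially negligible by Proposition~\ref{pr:sparse}), and then sum over core lengths---is a clean direct proof. The key graph-theoretic fact you invoke (a cyclically reduced word is conjugate into $C$ iff it labels a closed path at \emph{some} vertex of $\Gamma_C$) is standard and appears in \cite{km}; note that if a cyclic permutation $\gamma'$ of $\gamma$ loops at $v$, then $\gamma$ itself loops at the vertex reached from $v$ by the permuting prefix, so your containment $\Lambda\subseteq\bigcup_v L_v$ is indeed correct without passing to cyclic permutations.

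Two bookkeeping points that a full write-up would need to address in a line each: (i) the bound $n_\ell(\Lambda)\le\delta^\ell|S_\ell|$ is only asserted for large $\ell$, so the finitely many short-core terms must be absorbed into a constant (or $\delta$ enlarged slightly); (ii) your count $2m(2m-1)^{(k-\ell)/2-1}$ for the conjugating tail $p$ is off at the boundary $\ell=k$ (where $p=1$ and the count is~$1$), but this affects only constants, not the exponential rate. Neither point threatens the conclusion.
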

 \begin{proof} The statement 1) has been shown in
 \cite{multiplicative} and also in Proposition 1.10 in
 \cite{averina-frenkel}.  The statement 2) is shown in Proposition 1.11 in
 \cite{averina-frenkel} \end{proof}

\begin{proposition}
\label{pr:union-cosets} Let $C$ be a finitely generated subgroup
of infinite index in $F$ and $S$ is a Schreier transversal of
 $C$ in $F.$ If $S_0 \subseteq S$ is a exponentially negligible subset of $F$ then the set $\mathop{\bigcup}\limits_{s \in S_0}Cs$ is exponentially negligible in
$F.$
 \end{proposition}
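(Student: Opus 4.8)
The plan is to show that the set $\bigcup_{s\in S_0}Cs$ is exponentially negligible by exhibiting a map from it into the product $C^* \circ S_0$ (or a closely related set) built from two ingredients that are already known to be exponentially negligible, and then to invoke the stability properties of negligibility under products and $d$-isometries established in Propositions \ref{le:basic-sparse} and \ref{le:basic-sparse-2}. The key observation I would use is that every element $w$ of a coset $Cs$ with $s\in S$ has a canonical decomposition reflecting the Schreier structure: writing $w = cs$ with $c\in C$, there will be some cancellation between $c$ and $s$, but the amount of cancellation is controlled because $s$ ranges over a Schreier transversal. The point is to reduce the union of cosets, which is a single countable set, to a controlled product of the two negligible sets $C$ (or $C^*$) and $S_0$.

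First I would make the decomposition precise. For $w\in \bigcup_{s\in S_0}Cs$ there is a unique $s\in S_0$ with $w\in Cs$, and then a unique $c\in C$ with $w=cs$. By folding of $\Gamma^\ast$ and the Schreier property, the amount of cancellation $c(c,s)$ in the product $cs$ is bounded by a constant $t$ depending only on $C$ (essentially by $\max_{s\in S_{\rm int}}|s|$, since the terminal behaviour of $c$ as a path in $\Gamma^\ast$ must match an initial internal segment of $s$). This gives $w = c\mathbin{\circ_t} s$ in the notation of Proposition \ref{le:basic-sparse}(4). Next I would note that $C$ is exponentially negligible by Proposition \ref{pr:sparse}(1) and $S_0$ is exponentially negligible by hypothesis, so by Proposition \ref{le:basic-sparse}(4) the set $C \mathbin{\circ_t} S_0 = \{c r \mid c\in C,\ r\in S_0,\ c(c,r)\le t\}$ is exponentially negligible.

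Finally I would check that $\bigcup_{s\in S_0}Cs \subseteq C\mathbin{\circ_t}S_0$, which is immediate from the decomposition above, and conclude by Proposition \ref{le:basic-sparse}(1) that $\bigcup_{s\in S_0}Cs$ is exponentially negligible. Alternatively, if one prefers the $d$-isometry route, one can define $f:\bigcup_{s\in S_0}Cs \to C\times S_0$ by $w\mapsto (c,s)$; the bounded cancellation makes this a $d$-isometry with $d = t$ onto its image and with uniformly bounded (in fact singleton) fibers, so Proposition \ref{le:basic-sparse-2}(2) applies once the image is seen to be negligible.

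The main obstacle I anticipate is establishing the uniform bound $t$ on the cancellation $c(c,s)$: one must argue that although $c\in C$ can be arbitrarily long, the cancellation it can produce against a transversal element $s$ is bounded independently of $c$. I would handle this by working in the coset graph $\Gamma^\ast$: the prefix of $s$ where cancellation occurs corresponds to a path that the reduced word for $c$ must traverse in reverse, and since $c$ labels a closed path at $1_C$ inside the finite graph $\Gamma$, the overlapping initial segment of $s$ must stay within $\Gamma$, hence has length at most the diameter of $\Gamma$ (equivalently $\max_{s'\in S_{\rm int}}|s'|$). This is exactly the kind of bound implicit in Proposition \ref{pr:basic-properties}(4), and it is the crux that lets the argument go through with a single constant $t$ valid for all cosets simultaneously.
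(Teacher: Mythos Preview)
Your proposal is correct and follows the same route as the paper's proof: bound the cancellation in each product $cs$ by a constant and then apply Proposition~\ref{le:basic-sparse}(4) to $C \mathbin{\circ_t} S_0$. The paper argues cone-by-cone (reducing to $S_0 \subseteq C(u)$ with $v_u \in \partial^+\Gamma$ and bounding the cancellation there by $|u|$) while you take the single global bound $\max_{s'\in S_{\rm int}}|s'|$; note that this quantity is the depth of $T$, which can exceed the diameter of $\Gamma$ when $T$ is not geodesic, so of the two bounds you mention only $\max_{s'\in S_{\rm int}}|s'|$ is always valid.
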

 \begin{proof}  By Proposition \ref{pr:basic-properties} $S = S_{\rm int} \cup S_{\rm ext}$,
  where  $S_{\rm int}$ is a  finite set and $S_{\rm ext}$ is a union of finitely many cones $C(u), u \in \partial^+ \Gamma.$
 It suffices to prove the result for $S_0 \cap C(u)$ for a fixed $u \in \partial^+ \Gamma$. To this end we may
 assume from the beginning that $S_0 \subseteq C(u)$. If $s$ is the representative of $u$ in $S$ then every word
 from $C(u)$ contains $s$ as an initial segment. Since $s$ is not readable in $\Gamma$ the amount of cancelation $c(w,t)$ in
  the product $wt$, where $w \in C$ and $t \in C(u)$ does not exceed the length of $s$. Hence
 $$CS_0 = C \mathop{\circ}\limits_{|s|} S_0$$
  and the result follows from the statement 4) of Proposition \ref{le:basic-sparse}.   \end{proof}

\begin{proposition}
\label{pr:double-cosets} Let $A$ and $B$ be finitely generated
subgroups of infinite index in $F$. Then for any $w \in F$ the
double coset $AwB$ is exponentially negligible in $F.$
\end{proposition}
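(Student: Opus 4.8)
The plan is to reduce the statement to Proposition~\ref{pr:union-cosets} by exhibiting $AwB$ as a union of cosets indexed by a small transversal. Since left multiplication by $A$ fixes $AwB$, the double coset is a union of \emph{right} cosets of $A$: writing each element in the form $a\cdot(wb)$ gives $AwB=\bigcup_{b\in B}A(wb)$, where $A(wb)=Ag$ with $g=wb$. Fix a geodesic Schreier transversal $S$ of $A$ in $F$. Then $AwB=\bigcup_{s\in S_0}As$, where $S_0=S\cap AwB$ is exactly the set of transversal representatives of the cosets $Awb$, $b\in B$ (if $g=as\in AwB$ with $s\in S$, then $s=a^{-1}g\in AwB$, and conversely $s\in AwB$ forces $As\subseteq AwB$ by left $A$-invariance). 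By Proposition~\ref{pr:union-cosets} it therefore suffices to prove that $S_0$ is exponentially negligible.

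To analyze $S_0$, I would consider the surjection $\beta\colon B\to S_0$ sending $b$ to the representative of $Awb$. One has $\beta(b)=\beta(b')$ if and only if $Awb=Awb'$, i.e. $bb'^{-1}\in B\cap w^{-1}Aw=:D$, so the fibers of $\beta$ are the right cosets of $D$ in $B$ and $|S_0|=[B:D]$. The subgroup $D$ (finitely generated by Howson's theorem) drives a case split. If $[B:D]<\infty$, then $S_0$ is finite and $AwB$ is a \emph{finite} union of cosets of $A$, each exponentially negligible by Proposition~\ref{pr:sparse}(2); negligibility of the union then follows by iterating Proposition~\ref{le:basic-sparse}(2), and we are done.

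When $[B:D]=\infty$ the remaining task is to transfer the exponential negligibility of $B$ (Proposition~\ref{pr:sparse}(1)) across $\beta$. The natural tool is Proposition~\ref{le:basic-sparse-2}(1): using a geodesic transversal of $A$ gives the upper estimate $|\beta(b)|\le|w|+|b|$, so if $\beta$ restricted to a cross-section of $D\backslash B$ were a $d$-isometry with $d=|w|$, the claim would follow. \textbf{Here lies the main obstacle:} the matching lower bound $|\beta(b)|\ge|b|-|w|$ can fail, because when $wb$ cancels deeply into $A$ the representative $\beta(b)$ may be far shorter than $b$. Controlling this amounts to bounding how often a long $b\in B$ lands near the coset $w^{-1}A$, which is exactly governed by the infinite-index intersection $D=B\cap w^{-1}Aw$; the plan is to restrict $\beta$ to geodesic representatives of $D\backslash B$ and to show that deep cancellation forces $b$ into a bounded neighborhood of $D$, using the finiteness of the Stallings graphs $\Gamma_A,\Gamma_B$ and the Perron--Frobenius growth estimates underlying Proposition~\ref{pr:sparse}.

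An alternative route avoids this length bookkeeping: $A$ and $B$ are regular subsets of $F$, hence (by closure of rational subsets under product together with Benois' theorem that the reduced words of a rational subset of $F$ form a regular language) $AwB$ is itself regular, and so is $S_0=S\cap AwB$ as an intersection of regular sets. By the dichotomy of Theorem~\ref{th:regular-negl-thick}(3), $S_0$ is then either thick or exponentially negligible, so it suffices to show $S_0$ is \emph{not} thick, i.e. by Theorem~\ref{th:regular-negl-thick}(2) that its prefix closure contains no cone. The obstacle reappears in this guise, since a cone in the prefix closure would force a complete vertex in the automaton assembled from $\Gamma_A$, $\Gamma_B$ and $w$, hence finite index in one of the two factors, contradicting the hypothesis; this is where the infinite index of $A$ and $B$ is used.
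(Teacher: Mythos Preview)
Your reduction is the same as the paper's: write $AwB=\bigcup_{s\in S_0}As$ for a Schreier transversal $S$ of $A$ and apply Proposition~\ref{pr:union-cosets}, so that everything comes down to showing $S_0$ is exponentially negligible. But from that point on the proposal is genuinely incomplete. You try to push negligibility forward along $\beta\colon B\to S_0$ via Proposition~\ref{le:basic-sparse-2}(1), correctly notice that the lower length bound fails, and then only sketch two possible rescues. Neither sketch is carried out: the ``deep cancellation forces $b$ near $D$'' plan is not an argument, and in the Benois route the assertion that a cone in $\overline{S_0}$ would force a complete vertex in some product automaton, \emph{hence} finite index of $A$ or $B$, is not justified (the accepting automaton for $S_0$ is not $\Gamma_A$ or $\Gamma_B$, and a complete vertex there has no obvious index interpretation).

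The paper resolves the obstacle by reversing the direction of the map: it builds a $d$-isometry $\alpha\colon S_0\to B$ with uniformly bounded fibers and then applies Proposition~\ref{le:basic-sparse-2}(2). After the harmless reduction $AwB=(AB^{w^{-1}})w$ to the case $w=1$, take $s\in S_0\cap S_{\rm ext}$ and write $as=b$ with $a\in A$, $b\in B$. Because $s$ is an external Schreier representative, the cancellation in $as$ is at most the diameter $d$ of $\Gamma_A$ (this is exactly the bound used in Proposition~\ref{pr:union-cosets}), so $s$ and $b$ share a terminal segment $t$ with $|t|\ge |s|-d$. Now comes the second idea you are missing: one does \emph{not} send $s$ to $b$ (which may be much longer than $s$), but to a short element of $B$ manufactured from $t$. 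Reading $t$ in $\Gamma_B$ ends at $1_B$ and starts at some vertex $v$; if $t_v$ is the tree path in a geodesic spanning tree of $\Gamma_B$ from $1_B$ to $v$, then $b_s:=t_v\, t\in B$ and $|s b_s^{-1}|\le 2d$. Thus $\alpha\colon s\mapsto b_s$ is a $2d$-isometry, and since any two preimages of the same $b_s$ agree outside an initial segment of length $\le 2d$, the fibers are uniformly bounded. Proposition~\ref{le:basic-sparse-2}(2) together with the exponential negligibility of $B$ finishes the proof. The missing idea, in short, is to go from $S_0$ to $B$ rather than from $B$ to $S_0$, and to reroute through the spanning tree of $\Gamma_B$ to control lengths.
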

\begin{proof} Observe, that $AwB = AB^{w^{-1}}w$, so by the statement 2
of Proposition \ref{pr:sparse} it suffices to show that
 $AB^{w^{-1}}$ is exponentially negligible. Since $B^{w^{-1}}$ is just
another finitely generated subgroup of infinite index in $F$ one can
assume  from the beginning that $w = 1$. Let $S$ be a geodesic
Schreier transversal for $A$ in $F.$ Then
  $$AB = \bigcup_{s \in S_0} As$$
 for some subset $S_0 \subseteq S.$ By Proposition \ref{pr:union-cosets}  it
 suffices to show that the subset $S_0$
  is exponentially negligible. Since the set $S_{\rm int}$ is finite we may assume that  $S_0 \subseteq S_{\rm ext}$.
  Now we construct an  $r$-isometry  $\alpha : S_0
  \rightarrow B.$ Let $T_A$ be the spanning subtree
  of $\Gamma_A$ such that $S = S_{{T_A}^{\ast}}$ and $T_B$ be a spanning geodesic
  subtree of $\Gamma_B$. Denote by  $d$ the maximum of the diameters of the trees $T_A$ and $T_B.$
  To describe the map $\alpha$ choose an
  arbitrary element $s \in S_0.$ Without loss of generality assume that $\abs{s} \geq d,$ because there are only finitely
  many such $s$ that have smaller length and by Proposition
  \ref{pr:sparse} they will not extremely change asymptotic size of $AB$ since $A$
  of infinite index in $F.$
  Then $as = b$ for some $a \in A$
  and $b \in B$. We claim that there exists an element $b_s \in B$
  such that $|sb_s^{-1}| \leq 2d.$ Indeed, the
  cancelation in the product $as$ is at most  $d$ (see the argument in Proposition \ref{pr:union-cosets}).
   Hence $s$ and $b$
  have a common terminal segment $t$ of length
  at least $|s|-d$ (recall that $|s| \geq d$).  It follows that in the graph
 $\Gamma_B$ there exists a path from some vertex $v$ to $1_B$ with
 the label $t_v.$  Then $b_s = t_vt \in B$ and $|sb_s^{-1}| =
  |st^{-1}t_v^{-1}| \leq   2d$.  Hence  $s$ and $b_s$ has a long common terminal segment and differ only on the initial segment of length at most $2d$. It follows that
  the map $\alpha: s \to b_s$ gives a   $2d$-isometry $\alpha:S_0 \to B$. Notice that $\alpha$ has uniformly bounded fibers. Indeed, if $\alpha(s_1) = \alpha(s_2) = b$ then $s_1$ and $s_2$ differ from $b$, hence from each other, only on the initial segment of length at most $2d$. So there are at most $(2d)^{2|X|}$ such distinct elements.  Since $B$ is exponentially negligible by Proposition \ref{le:basic-sparse-2}  the set $S_0$ is
  also exponentially negligible, as claimed. This proves the result.
Notice, that the property being geodesic for Schreier transversal
$S$ for $A$ in $F$ is not crucial for our prove. Namely, for
arbitrary Shreier transversal $S$ all conclusions can be repeated
with slightly different constant.\end{proof}

Now we can state the main result of the section.

\begin{theorem}\label{th:sing-nst-negligible}
Let $C$ be a finitely generated subgroup of infinite index in $F$
 and $S$ a Schreier transversal for $C$. Then the following hold:
 \bi

     \item[1)] The generalized normalizer $N^*_F(C)$ of
$C$ in $F$  is exponentially negligible in $F$.

\item[2)] The set of singular representatives $S_{\rm sin}$
is exponentially negligible in $F$.

\item[3)] The set $S_{\rm uns}$ of unstable representatives is
exponentially negligible in $F$.
 \ei
\end{theorem}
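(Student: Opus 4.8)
The plan is to reduce all three statements to the negligibility results already established, since the structural decompositions in Proposition \ref{pr:basic-properties} express each of the sets in question as a \emph{finite} union of double cosets or cosets of $C$, and the negligibility of such building blocks is precisely what Propositions \ref{pr:double-cosets} and \ref{pr:sparse} provide.

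For statement 1), I would first invoke the description of the generalized normalizer recorded in the proof of Proposition \ref{pr:basic-properties}: $N^*_F(C)$ is a finite union of double cosets, each of the form $C s_1 s_2^{-1} C$ with $s_1, s_2 \in S_{\rm int}$. Each such double coset is of the shape $CwC$ with $w = s_1 s_2^{-1}$, and $C$ is a finitely generated subgroup of infinite index, so Proposition \ref{pr:double-cosets} applies and shows it is exponentially negligible. A finite union of exponentially negligible sets is exponentially negligible by statement 2) of Proposition \ref{le:basic-sparse}, which yields 1).

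Statement 2) is then immediate: by definition $S_{\rm sin} \subseteq N^*_F(C)$, since a singular representative is precisely one lying in the generalized normalizer. Hence by statement 1) together with statement 1) of Proposition \ref{le:basic-sparse} (subsets of exponentially negligible sets are exponentially negligible), $S_{\rm sin}$ is exponentially negligible. Alternatively one can appeal directly to statement 3) of Proposition \ref{pr:basic-properties}, which already places $S_{\rm sin}$ inside the same finite union of double cosets. For statement 3), statement 4) of Proposition \ref{pr:basic-properties} expresses $S_{\rm uns}$ as a finite union of left cosets $s_1 s_2^{-1} C$ with $s_1, s_2 \in S_{\rm int}$. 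Each such left coset is a coset of $C$, hence exponentially negligible by statement 2) of Proposition \ref{pr:sparse}, and a finite union of these is again exponentially negligible by Proposition \ref{le:basic-sparse}.

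The proof presents no genuine obstacle at this stage, because all of the analytic work has already been absorbed into Proposition \ref{pr:double-cosets} — whose $d$-isometry onto the second subgroup with uniformly bounded fibers, feeding into Proposition \ref{le:basic-sparse-2}, was the real engine — and into the structural Proposition \ref{pr:basic-properties}. If one were proving the theorem from scratch, the true difficulty would lie in establishing that double cosets $CwC$ are exponentially negligible. Given the earlier results, the only point that still requires attention is ensuring the relevant unions are \emph{finite}, which is guaranteed by the finiteness of $S_{\rm int}$ (equivalently, of $V(\Gamma)$); without finiteness the closure properties of Proposition \ref{le:basic-sparse} would not be available.
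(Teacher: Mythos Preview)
Your proposal is correct and matches the paper's own proof essentially line for line: statement 1) is handled via the finite double-coset decomposition of $N^*_F(C)$ and Proposition \ref{pr:double-cosets}, statement 2) follows from the inclusion $S_{\rm sin}\subseteq N^*_F(C)$, and statement 3) uses the finite left-coset decomposition from Proposition \ref{pr:basic-properties} together with Proposition \ref{pr:sparse}. The only difference is that you spell out the role of Proposition \ref{le:basic-sparse} for finite unions and subsets a bit more explicitly than the paper does.
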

 \begin{proof}
 To see 1) recall that the generalized normalizer $N^*_F(C)$ of $C$ in $F$
is a finite  union  of  double cosets of $C$ in $F.$ Therefore
$N^*_F(C)$ is exponentially negligible in $F$ by Proposition
\ref{pr:double-cosets}.

2) follows immediately  from 1).

 To prove 3) observe that $S_{\rm uns}$ is a finite union of left
cosets  of $C$ (see 3) in Proposition \ref{pr:basic-properties}). Now the result  follows from   Proposition \ref{pr:sparse}. \end{proof}

Theorem \ref{th:sing-nst-negligible} can be strengthen  as follows.

\begin{corollary}
Let $C$ be a finitely generated subgroup of infinite index in $F.$  Then the sets
$$Sin(C) = \mathop{\bigcup}\limits_{S} S_{\rm sin}, \ \ Uns(C) = \mathop{\bigcup}\limits_{S} S_{\rm uns},$$
where $S$ runs over all Schreier transversals of $C$, are
exponentially negligible.
\end{corollary}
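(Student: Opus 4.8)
The plan is to observe that, despite appearances, the two unions defining $Sin(C)$ and $Uns(C)$ are in fact \emph{finite} unions, after which the statement follows immediately from results already established. The crucial point is Corollary \ref{co:number-transv}: the number of distinct Schreier transversals of $C$ in $F$ equals the number of spanning subtrees of the subgroup graph $\Gamma_C$, and since $C$ is finitely generated the graph $\Gamma_C$ is finite, hence has only finitely many spanning subtrees. Therefore the index set over which $S$ ranges in both $Sin(C) = \bigcup_S S_{\rm sin}$ and $Uns(C) = \bigcup_S S_{\rm uns}$ is finite.

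With finiteness in hand, I would simply combine the per-transversal results with the union property of exponentially negligible sets. For each individual Schreier transversal $S$, Theorem \ref{th:sing-nst-negligible}(2) gives that $S_{\rm sin}$ is exponentially negligible and Theorem \ref{th:sing-nst-negligible}(3) gives the same for $S_{\rm uns}$. Then Proposition \ref{le:basic-sparse}(2), which states that the union of two exponentially negligible sets is exponentially negligible, extends by an obvious induction to any finite union. Applying this to the finitely many sets $S_{\rm sin}$ (respectively $S_{\rm uns}$) yields that $Sin(C)$ and $Uns(C)$ are exponentially negligible, as claimed.

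The only point that deserves a word of care is the uniformity of the exponential rate across the finite family. For each transversal $S$ one obtains a rate upper bound $\delta_S < 1$ with $f_k < \delta_S^k$ for all large $k$; taking the maximum $\delta = \max_S \delta_S < 1$ over the finitely many transversals and absorbing the resulting constant factor (the number of transversals) into a slightly larger base $\delta' \in (\delta,1)$ produces a single exponential bound valid for the whole union for all sufficiently large $k$. This is precisely the content packaged by Proposition \ref{le:basic-sparse}(2), so no separate estimate is required.

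In short, there is no real obstacle here: the entire content of the strengthening is the finiteness of the set of Schreier transversals (Corollary \ref{co:number-transv}), which converts the a priori infinite-looking unions into finite ones, and then the corollary is a direct consequence of Theorem \ref{th:sing-nst-negligible} together with the closure of exponential negligibility under finite unions.
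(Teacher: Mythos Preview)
Your proof is correct and follows exactly the same approach as the paper: invoke Corollary~\ref{co:number-transv} to see that only finitely many Schreier transversals occur, then combine Theorem~\ref{th:sing-nst-negligible} with Proposition~\ref{le:basic-sparse} to conclude. Your additional remark about uniformity of the rate is a welcome clarification but is already implicit in Proposition~\ref{le:basic-sparse}(2), as you yourself note.
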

\begin{proof}
By Corollary \ref{co:number-transv} there are only finitely many Schreier transversals of $C$. Now the result follows from Theorem \ref{th:sing-nst-negligible} and Proposition \ref{le:basic-sparse}.
\end{proof}

\section{Comparing sets at infinity}\label{Section:theorem_spherical}

 \subsection{Comparing Schreier representatives}
 \label{subsec:comp-Schreier}

 In this section we give another  version of Theorem \ref{th:sing-nst-negligible}.  To explain
 we need a few definitions.

For subsets $R, L$ of $F$ we define their {\em size ratio} at length $k$ by
   $$f_k(R,L) = \frac{f_k(R)}{f_k(L)} = \frac{|R\cap S_k|}{|L\cap S_k|}.$$
  The size ratio  $\rho(R,L)$ {\em at infinity}
  of $R$ relative to $L$ (or the relative asymptotic density) is defined by
  $$\rho(R,L) = \limsup_{k \rightarrow \infty} f_k(R,L).$$
  By $r_L(R)$ we denote the {\em cumulative size ratio} of $R$ relative to $L$:
  $$
  r_L(R) = \sum_{k=1}^\infty f_k(R,L).
  $$
  We say that $R$ is {\em $L$-measurable}, if $r_L(R)$ is finite.
$R$ is called {\em negligible} relative to $L$  if $\rho(R,L) = 0$.
Obviously, an $L$-measurable set is $L$-negligible.
   A set $R$ is termed {\em exponentially negligible} relative to  $L$ (or  {\em exponentially $L$-negligible}) if
   $f_k(R,L) \leq q^k$ for all sufficiently large $k$.

The following result is simple,  but useful.

\begin{proposition}\label{pr:cone-negl}
Let $R$ be an exponentially negligible set in $F$.

\bi
\item [1)]  For any $w \in F$
 the set $R$ is a exponentially negligible relative to the cone $C(w).$

    \item [2)]  The set  $R$ is exponentially negligible relative to any exponentially generic subset $T$ of $F$.
\ei
\end{proposition}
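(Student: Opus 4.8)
The plan is to reduce both statements to a single elementary principle: \emph{if $R$ is exponentially negligible and $L$ is a subset of $F$ whose frequency $f_k(L)$ stays bounded below by a fixed positive constant for all large $k$, then $R$ is exponentially negligible relative to $L$.} To prove this principle, suppose $f_k(R) < \delta^k$ (with $\delta < 1$) and $f_k(L) \geq c > 0$ for all $k \geq k_0$. Then for $k \geq k_0$,
\[
f_k(R,L) = \frac{f_k(R)}{f_k(L)} < \frac{\delta^k}{c}.
\]
Choosing any $q$ with $\delta < q < 1$, the quantity $c^{-1}(\delta/q)^k$ tends to $0$, so $c^{-1}\delta^k \leq q^k$ for all sufficiently large $k$; hence $f_k(R,L) \leq q^k$ eventually, which is exactly exponential $L$-negligibility. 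Both parts then amount to checking the lower bound $f_k(L) \geq c$ for the two choices of $L$.

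For part 1), I would compute $f_k(C(w))$ directly. For $w \neq 1$ and $k \geq \abs{w}$, a word in $C(w) \cap S_k$ is obtained by appending to $w$ a reduced word of length $k - \abs{w}$ whose first letter is not the inverse of the last letter of $w$; this gives $\abs{C(w) \cap S_k} = (2m-1)^{k - \abs{w}}$. Dividing by $\abs{S_k} = 2m(2m-1)^{k-1}$ yields
\[
f_k(C(w)) = \frac{(2m-1)^{k-\abs{w}}}{2m(2m-1)^{k-1}} = \frac{1}{2m(2m-1)^{\abs{w}-1}} = \lambda(w),
\]
a positive constant independent of $k$. (The excluded case $w = 1$ gives $C(1) = F$, so $f_k(C(1)) = 1$ and the statement reduces to the hypothesis.) Thus the principle applies with $c = \lambda(w)$.

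For part 2), recall that $T$ exponentially generic means $1 - \eps^k < f_k(T) < 1$ for large $k$, with $\eps < 1$. In particular $f_k(T) > 1 - \eps^k > \tfrac12$ once $k$ is large enough that $\eps^k < \tfrac12$, so the hypothesis of the principle holds with $c = \tfrac12$, and the principle then gives $f_k(R,T) \leq q^k$ eventually. There is no serious obstacle here: the only points requiring care are the bookkeeping of the two exponential bases, namely absorbing the constant $c^{-1}$ into a slightly larger base $q \in (\delta,1)$, and correctly counting $\abs{C(w)\cap S_k}$ (in particular remembering that the leading letter after $w$ has $2m-1$ rather than $2m$ choices when $w \neq 1$).
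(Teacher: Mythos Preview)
Your proof is correct and follows essentially the same idea as the paper: both parts reduce to bounding $f_k(R)/f_k(L)$ directly, using that $f_k(C(w))$ is the constant $\lambda(w)$ in part~1) and that $f_k(T)$ is bounded below in part~2). Your packaging via a single ``principle'' is a mild improvement in organization; for part~2) you use the cruder but simpler observation $f_k(T) > 1-\eps^k > \tfrac12$ eventually, whereas the paper computes $\lim_{k\to\infty}(1-q^k)^{1/k}=1$ to obtain the same conclusion with a rate bound arbitrarily close to the original $\delta$---which your argument also yields, since $q$ can be taken as close to $\delta$ as desired.
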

\begin{proof} Observe, that $f_k(C(w))  = 1/2m(2m-1)^{|w|-1}$ is a constant. Since
$$f_k(R,C(w)) = \frac{f_k(R)}{f_k(C(w))}$$
it follows that $R$ is exponentially negligible relative to $C(w)$. This proves 1).

 To  prove 2) denote by $p$ and $q$ the corresponding rate bounds for $R$ and $T$, so $f_k(R) \leq p^k, f_k(T) \geq 1-q^k$ for sufficiently large $k$.  Then, for such $k$,
$$f_k(R,T) = \frac{f_k(R)}{f_k(T)} \leq \frac{p^k}{1-q^k} =   \left( \fracd{p}{(1 - q^k)^{\frac{1}{k}}}\right)^k.$$
 Since
 $$\mathop{lim}\limits_{k \rightarrow \infty} \fracd{p}{(1 - q^k)^{\frac{1}{k}}} = p$$
it follows that for any $\varepsilon > 0$
$$f_k(R,T) \leq (p+\varepsilon)^k$$
 for sufficiently large $k$, as claimed.

\end{proof}

\begin{corollary}\label{s_nst-ins}
Let $C$ be a finitely generated subgroup of infinite index in $F$ and $S$ a Schreier transversal for $C$.
Then the following  hold:

\bi
    \item[1)] The set of singular representatives $S_{\rm sin}$
     is exponentially negligible in $S$.

\item[2)] The set $S_{\rm uns}$ of unstable representatives is
exponentially negligible in $S$.
 \ei
\end{corollary}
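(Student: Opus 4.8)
The plan is to deduce this relative statement directly from the absolute Theorem \ref{th:sing-nst-negligible} together with Proposition \ref{pr:cone-negl}(1), exploiting the fact that a Schreier transversal of an infinite-index subgroup always contains a cone. Write $R$ for either $S_{\rm sin}$ or $S_{\rm uns}$. By Theorem \ref{th:sing-nst-negligible}, $R$ is exponentially negligible in $F$, so $f_k(R) \leq p^k$ for some $p < 1$ and all sufficiently large $k$.

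First I would locate a cone inside $S$. Since $C$ has infinite index, $\Gamma \neq \Gamma^\ast$, so $\partial \Gamma$ and hence $\partial^+ \Gamma$ are nonempty; by Proposition \ref{pr:basic-properties}(2) this means $S_{\rm ext}$ is a nonempty finite union of cones. Fix one such cone $C(u) \subseteq S_{\rm ext} \subseteq S$. Because $C(u) \subseteq S$, we have $f_k(S) \geq f_k(C(u))$ for every $k$, and therefore
$$
f_k(R, S) = \frac{f_k(R)}{f_k(S)} \leq \frac{f_k(R)}{f_k(C(u))} = f_k(R, C(u)).
$$

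It then remains to invoke Proposition \ref{pr:cone-negl}(1): since $R$ is exponentially negligible in $F$, it is exponentially negligible relative to the cone $C(u)$, that is, $f_k(R, C(u)) \leq q^k$ for some $q < 1$ and all large $k$. Combining this with the displayed inequality gives $f_k(R, S) \leq q^k$ for large $k$, which is precisely the claim that $R$ is exponentially negligible in $S$. Taking $R = S_{\rm sin}$ yields part 1) and $R = S_{\rm uns}$ yields part 2).

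There is no genuine obstacle here, this being a corollary of the absolute result; the only point that must not be skipped is the existence of a cone inside $S$, which is exactly what keeps $f_k(S)$ from decaying and thus lets the absolute exponential bound on $f_k(R)$ survive the division by $f_k(S)$. Equivalently, one may argue via a direct lower bound: since $S_{\rm ext}$ is a finite disjoint union of cones, each of length-independent (hence constant) frequency, and $S_{\rm int}$ is finite, one has $f_k(S) \geq c_0 > 0$ for all large $k$; then $f_k(R, S) \leq p^k / c_0 \leq q^k$ for any chosen $p < q < 1$ and large $k$, making explicit that $f_k(S)$ is in fact eventually constant.
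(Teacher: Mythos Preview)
Your argument is correct and is essentially the same as the paper's: you invoke Theorem~\ref{th:sing-nst-negligible} for the absolute bound, use Proposition~\ref{pr:basic-properties}(2) to find a cone inside $S$, and then apply Proposition~\ref{pr:cone-negl}(1) (or equivalently the direct constant lower bound on $f_k(S)$) to pass from ``exponentially negligible in $F$'' to ``exponentially negligible in $S$''. The paper additionally cites Proposition~\ref{pr:basic-connections}, presumably to note $S_{\rm sin}\subseteq S_{\rm uns}$ and thereby reduce 1) to 2), but your direct treatment of both cases is perfectly fine.
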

\begin{proof} The statements of this corollary follow immediately from
Theorem \ref{th:sing-nst-negligible} and Propositions
\ref{pr:basic-properties}, \ref{pr:basic-connections} and
\ref{pr:cone-negl}. \end{proof}

\subsection{Comparing regular sets}
 \label{subsec:comp-regular}

In this section we give an asymptotic classification of regular
subsets of $F$ relative to a fixed prefix-closed regular subset $L
\subseteq F$.

For this purpose we are going to describe how one can use a random
walk on the finite automaton $\mathcal{B}$ recognizing regular
subset $R \subseteq L$ similar to the one in Section
\ref{subsec:freq-measure}. It will be convenient to further put
$\mathcal{B}$ to special form consistent to $L.$

Recall  Myhill-Nerode's  theorem on  regular languages (see, for
example, \cite{eps}, Theorem 1.2.9.) For a language $R$ over an
alphabet $A$ consider an equivalence relation $\sim$ on $A^{\ast}$
defined as follows: two strings $w_1$ and $w_2$ are equivalent if
and only if   for each string $u$ over $A$ the words $w_1 u$ and
$w_2 u$ are either simultaneously  in $R$ or not in $R$. Then $R$ is
regular if and only if there are only finitely many
$\sim$-equivalence classes.

Now, let  $R \subseteq L$. Define an  equivalence relation $\sim$ on
$L$ such that
 $w_1 \sim w_2$ if and only if  for each $u \in F$ the following condition holds:
 $w_1u = w_1 \circ u$ and $w_1u \in R$ if and only if $w_2 u = w_2 \circ u$ and $w_2 u \in R.$

The following is an analog of Myhill-Nerode's  theorem  for free
groups.
\begin{lemma}\label{Myh-Ner_group}
Let  $R \subseteq L \subseteq F$ and $L$ prefix-closed and regular.
Then $R$ is regular if and only if there are only finitely many
$\sim$-equivalence classes in $L$.
\end{lemma}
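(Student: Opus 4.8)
The plan is to establish both directions by adapting the classical Myhill--Nerode argument, with the modifications forced by the free-group setting: every product in the definition of $\sim$ is restricted to the no-cancellation case $w\circ u$, and $R$ sits inside the prefix-closed regular set $L$.

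For the forward direction, suppose $R$ is regular. I would fix a deterministic finite automaton $\mathcal{A}$ accepting $R$ and, after intersecting with the (regular) automaton accepting all reduced words, assume $\mathcal{A}$ reads only reduced words. For $w\in L$ let $q_w$ denote the state reached by reading $w$ in $\mathcal{A}$. The key observation is that whether a given $u\in F$ satisfies $wu=w\circ u$ depends only on the last letter of $w$, and that for such admissible $u$ the membership $wu\in R$ is decided by running $\mathcal{A}$ from $q_w$ on $u$, hence depends only on $q_w$. Consequently the pair (state $q_w$, last letter of $w$) determines the $\sim$-class of $w$: if two words of $L$ agree on both, then for every admissible $u$ they are simultaneously in or out of $R$, while for non-admissible $u$ the clause $wu=w\circ u$ fails for both. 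Since there are finitely many states and at most $2\abs{X}+1$ possibilities for the last letter (including the empty word), there are finitely many $\sim$-classes. Note this direction uses neither prefix-closure nor regularity of $L$.

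For the converse, assume finitely many $\sim$-classes and construct an automaton for $R$. I would first verify that $\sim$ is a right congruence for no-cancellation letter multiplication: if $w_1\sim w_2$ and $w_1x=w_1\circ x$, $w_2x=w_2\circ x$ for a letter $x$, then $w_1x\sim w_2x$. This is checked by substituting $u'=x\circ u$ into the defining condition of $\sim$ and noting that $u$ is admissible for $w_ix$ exactly when $u'$ is admissible for $w_i$. I would then refine $\sim$ by also recording the last letter of $w$; this multiplies the number of classes only by the finite factor $2\abs{X}+1$ and makes the transitions deterministic with respect to the reducedness constraint. The automaton then has these refined classes as states, together with a single sink state $\dagger$; the start state is the class of the empty word (which lies in $L$ since $L$ is prefix-closed, unless $R$ is empty and hence trivially regular), and the accept states are the classes meeting $R$, well-defined because taking $u=1$ in the definition of $\sim$ yields $w_1\in R\iff w_2\in R$.

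The delicate point, and the step I expect to be the main obstacle, is that $\sim$ only perceives membership in $R$, not in $L$, so I must reconcile the transition structure with the constraint of staying inside $L$. Here the hypotheses $R\subseteq L$ and $L$ prefix-closed are decisive: if $w\in L$ but $wx\notin L$, then no $\circ$-extension of $wx$ can lie in $R$, so $wx$ behaves exactly like a word whose $\sim$-class contains no element of $R$ and all of whose admissible extensions avoid $R$. I would formalize this by calling a class \emph{live} if some admissible extension of its members lies in $R$ (a property invariant under $\sim$), collapse every non-live class into $\dagger$, and direct a transition to $\dagger$ whenever the letter produces cancellation or leaves $L$. Using the congruence property I would then show that from a live class the target is well-defined, and that the one apparently ambiguous case, namely $w_1x\in L$ while $w_2x\notin L$ for $w_1\sim w_2$, is reconciled because the prefix-closure argument forces both images to be non-live, hence both equal to $\dagger$. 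The resulting finite automaton will accept exactly $R$, which will complete the proof.
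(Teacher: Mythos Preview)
Your proposal is correct and follows essentially the same route as the paper: the classical Myhill--Nerode construction with the obvious modifications for reduced words. The paper's own proof is only a brief sketch---it states that the argument is ``similar to the original one'' and outlines the automaton on the $\sim$-classes for the converse direction---whereas you work out carefully the points the sketch glosses over (the last-letter refinement, the sink state, and the reconciliation when $w_1x\in L$ but $w_2x\notin L$ via prefix-closure of $L$); these are exactly the details needed to turn the sketch into a proof.
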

\begin{proof}
 The proof is similar to the original one.
 We give a short  sketch of the most interesting part of it. If the set of the
equivalence classes is finite one can  define an automaton
$\mathcal{B}$ on the set of   equivalence class as states. If $x \in
X \cup X^{-1} $ and  $[w]$ is the equivalence class of some $w$ such
that $w \circ x \in R$ then one connects the state $[w]$ with an
edge labeled by $x$ to the state $[wx]$. The class  $[\varepsilon]$,
where $\varepsilon$ is the empty word, is the initial state, while a
state $[w]$ is  an accepting  state if and only if $w \in R.$ In
this case $L(\mathcal{B})= R.$
\end{proof}

Since $R$ is regular, we suppose that $\mathcal{B}$ as in Lemma
\ref{Myh-Ner_group} and modify it in the next way. Without loss of
generality we can assume that $\mathcal{A}$ is in the normal form,
i.e., it has only one initial state $I$ and doesn't contain
inaccessible states.

   Let $S =[w]$ be a state of $\mathcal{B}$. Denote by
$S^{pr}$ the uniquely defined state in  $\mathcal{A}$ which is the
terminal state of the path with the label $w$ in $\mathcal{A}$,
starting at $[\varepsilon]$.  The state $S^{pr}$ is well-defined, it
does not depend on the choice of $w$. We call $S^{pr}$ the
\emph{prototype}
 of $S$.

Since  $\mathcal{B}$ accepts only reduced words in  $X \cup X^{-1}$
one  can transform $\mathcal{B}$ to a form where the following hold:

\bi
\item [a)]  $\mathcal{B}$ has only one initial state $I$ and one accepting  state $Z$.

\item [b)] For any state $S$ of $\mathcal{B},$ all arrows which
enter $S$ have the same label $x \in X \cup X^{-1}$ and arrows
exiting from $S$ cannot have label $x^{-1}$ (this can be achieved by
splitting the states of $\mathcal{B}$, see Pic. 3). We shall say in
this situation that $S$ \emph{has type $x$.}

\item [c)] For every state $S$ of $\mathcal{B}$ there is a direct
path from $S$ to the accept state $Z.$

\item [d)] There are no
arrows entering the initial state $I.$
\par

 \ei

\begin{figure}
 \setlength{\unitlength}{0.8mm}
\begin{picture}(100,70)(-15,-10)

\thicklines \put(55,19){$\Longrightarrow$}

\put(0,0){\circle{6}} \put(0,20){\circle{6}} \put(0,40){\circle{6}}
\put(20,20){\circle{6}} \put(40,10){\circle{6}}
\put(40,30){\circle{6}}

\put(80,0){\circle{6}} \put(80,20){\circle{6}}
\put(80,40){\circle{6}} \put(100,10){\circle{6}}
\put(100,30){\circle{6}} \put(120,10){\circle{6}}
\put(120,30){\circle{6}}

\thinlines \put(3,3){\vector(1,1){14}} \put(4,20){\vector(1,0){12}}
\put(3,37){\vector(1,-1){14}} \put(24,18){\vector(2,-1){12}}
\put(24,22){\vector(2,1){12}}

\put(84,2){\vector(2,1){12}} \put(84,22){\vector(2,1){12}}
\put(84,38){\vector(2,-1){12}}

\put(104,10){\vector(1,0){12}} \put(103,13){\vector(1,1){14}}
\put(104,30){\vector(1,0){12}} \put(103,27){\vector(1,-1){14}}

\put(19,19){\scriptsize {$A$}} \put(98,9){\scriptsize {$A''$}}
\put(98,29){\scriptsize {$A'$}}

\put(8,10){\small {$b$}} \put(8,21){\small {$a$}} \put(10,31){\small
{$a$}} \put(29,16){\small {$d$}} \put(28,26){\small {$c$}}

\put(88,5.5){\small {$b$}} \put(88,26){\small {$a$}}
\put(89,36){\small {$a$}} \put(109,6){\small {$d$}}
\put(114,17.5){\small {$d$}} \put(111.5,24){\small {$c$}}
\put(109,31){\small {$c$}}

\end{picture}\\
Pic. 3. Splitting the states of the automaton $\mathcal{B}.$

\end{figure}

The final version of obtained automaton $\mathcal{B}$ we will call
an \emph{automaton consistent with $\mathcal{A}.$}

Now we are ready to define a no-return random walk on $\mathcal{B}$
as it was claimed above. Namely, let $\mathcal{B}$ be consistent
with $\mathcal{A}$ and let $S$ be a state in $\mathcal{B}$. Denote
by $\nu = \nu(S^{pr})$ the number of edges exiting from the
prototype state $S^{pr}$ in $\mathcal{A}$. The walker moves from $S$
along some outgoing edge with the uniform probability
$\frac{1}{\nu}.$ In this event, the probability
 that the walker hits an element $w \in R$ in $|w|$
steps (when starting at $[\varepsilon]$) is the product of
frequencies of arrows in a direct path from the initial state $I$ to
the accept state $Z$ with the label $w.$
This gives rise to the measure $\lambda_L$ on $R$:
$$\lambda_L(R)  =  \sum_{w \in R}\lambda_L(w) = \sum_{k=0}^{\infty}f^{\prime}_k(R,L),$$
where $$f^{\prime}_k(R,L) = \sum_{w \in R \cap S_k} \lambda_L(w).$$
\par Note that, generally speaking, $f^{\prime}_k(R,L)$ differs from
$f_k(R,L)$ defined in section \ref{subsec:comp-Schreier}. Indeed,
walking in $\mathcal{B}$ we have different number of possibilities
to continue our walk on the next step depending on way we chose. On
the other hand, $f^{\prime}_k(R,F(X)) = f_k(R,F(X)).$
\par Now we can use the tools of random walks to compute $\lambda_L(R)$. Notice,
that $\lambda_L$ is multiplicative, i.e.,
$$\lambda_L(u v) = \lambda_L(u) \lambda_L(v)$$
for any $u,v \in R$ such that $uv = u \circ v$ and $uv \in R$.
  We say that $R$ is {\em $\lambda_L$-measurable}, if $\lambda_L(R)$ is finite.
   A set $R$ is termed {\em exponentially $\lambda_L$-measurable}) if
   $f_k^{\prime}(R,L) \leq q^k$ for all sufficiently large $k$.

The following result is simple, but useful.

Let $w \in F.$ The set  $C_L(w) = L \cap C(w)$ is called an
$L$-cone.  Obviously, $C_L(w)$  is a regular set. We say that
$C_L(w)$ is \emph{$L$-small}, if it is exponentially
$\lambda_L$-measurable.

The following is the main result of this section.

\begin{theorem}\label{Lcones}
Let $R$ be a regular subset of a prefix-closed regular set $L$ in a
free group $F.$ Then either the  prefix closure $\overline{R}$ of
$R$ in $L$ contains a non-small  $L-$cone or $\overline{R}$ is
exponentially $\lambda_L$-measurable.
\end{theorem}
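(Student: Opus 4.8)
The plan is to translate the statement into a question about the spectral radius of the transition matrix of the no-return random walk on the automaton $\mathcal{B}$ consistent with $\mathcal{A}$. First I would introduce the nonnegative matrix $P$ indexed by the states of $\mathcal{B}$, with $P_{S,S'}$ equal to the number of edges $S \to S'$ divided by $\nu(S^{pr})$. Because $R \subseteq L$ and $L$ is prefix-closed, we have $\overline{R}\subseteq L$, and every outgoing label of a state of $\mathcal{B}$ is already an outgoing label of its prototype in $\mathcal{A}$; hence each row sum of $P$ is at most $1$, i.e. $P$ is substochastic and $r(P)\le 1$. Extending $\lambda_L$ to $\overline{R}$ as the obvious path-probability to intermediate states, and writing $\pi_k = e_I P^k$ for the distribution after $k$ steps from the initial state $I$, property c) (every state reaches $Z$) gives that $\overline{R}\cap S_k$ is exactly the set of labels of length-$k$ paths out of $I$, so $f^{\prime}_k(\overline{R},L) = e_I P^k \mathbf{1}$. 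The whole theorem thus reduces to the growth of $e_I P^k \mathbf{1}$, controlled by the Perron--Frobenius eigenvalue $r(P)$.

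The dichotomy is then $r(P)<1$ versus $r(P)=1$. If $r(P)<1$, standard estimates on powers of a nonnegative matrix give $f^{\prime}_k(\overline{R},L)=e_I P^k\mathbf{1}\le C k^{d} r(P)^k \le q^k$ for any fixed $q\in(r(P),1)$ and all large $k$, so $\overline{R}$ is exponentially $\lambda_L$-measurable and we land in the second alternative.

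The remaining work, and the main obstacle, is the case $r(P)=1$, where I must produce a non-small $L$-cone inside $\overline{R}$. Passing to the Frobenius normal form, $r(P)$ equals the maximum of the spectral radii of $P$ restricted to the strongly connected components of $\mathcal{B}$, so some component $\mathcal{C}$ satisfies $r(P|_{\mathcal{C}})=1$. The key lemma here is that an irreducible substochastic matrix with Perron eigenvalue $1$ is actually stochastic: taking the strictly positive left Perron eigenvector $u$ with $u^{T}P|_{\mathcal{C}} = u^{T}$ and using $P|_{\mathcal{C}}\mathbf{1}\le\mathbf{1}$ entrywise, the equality $u^{T}P|_{\mathcal{C}}\mathbf{1}=u^{T}\mathbf{1}$ forces $P|_{\mathcal{C}}\mathbf{1}=\mathbf{1}$. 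Combined with substochasticity of the full matrix, this shows $\mathcal{C}$ is closed (no edges leave it) and complete (each $S\in\mathcal{C}$ carries all $\nu(S^{pr})$ prototype edges).

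Finally I would pick $S_0\in\mathcal{C}$, a path from $I$ to $S_0$ with label $w_0\in\overline{R}$, and form the $L$-cone $C_L(w_0)$. Tracking prototypes shows that any $L$-extension $w_0\circ u$ whose $\mathcal{B}$-path stays in $\mathcal{C}$ lies in $\overline{R}\subseteq C_L(w_0)$, and completeness and closedness of $\mathcal{C}$ mean every prototype-admissible continuation is available while the walk never leaves $\mathcal{C}$. Since $\lambda_L(w)$ depends only on the $\mathcal{A}$-path of $w$, the total $\lambda_L$-mass of these length-$k$ extensions equals $\lambda_L(w_0)\cdot(P|_{\mathcal{C}}^{\,k-|w_0|}\mathbf{1})_{S_0}=\lambda_L(w_0)$ by stochasticity, whence $f^{\prime}_k(C_L(w_0),L)\ge\lambda_L(w_0)>0$ for all $k\ge|w_0|$. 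Thus $C_L(w_0)$ is non-small and contained in $\overline{R}$. The delicate points to get right are the Perron--Frobenius "substochastic plus eigenvalue $1$ implies stochastic" lemma and the bookkeeping that the prototypes of states in $\mathcal{C}$ are consistent with $\mathcal{A}$, so that staying in $\mathcal{C}$ corresponds exactly to reading words of $C_L(w_0)$.
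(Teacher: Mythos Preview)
Your argument is correct and takes a genuinely different route from the paper's. The paper proceeds contrapositively: assuming every $L$-cone in $\overline{R}$ is $L$-small, it splits $\mathcal{B}$ at the accept state $Z$ into automata $\mathcal{B}_1,\mathcal{B}_2$ with $R=R_1\circ R_2$, writes $R_2=(R_3)^\circ_\infty$ where $R_3$ is the set of first returns to $Z$, and then augments $\mathcal{B}_3$ to an absorbing Markov chain with a dead state to show $\lambda_L(R_3)<1$; exponential $\lambda_L$-measurability of $R_2$ then comes from Lemma~\ref{le:T} via the Hardy--Ramanujan partition estimate. Your approach bypasses this decomposition entirely by working with the full substochastic matrix $P$ and the dichotomy $r(P)<1$ versus $r(P)=1$. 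What you gain is a cleaner structural explanation of where a non-small cone comes from (a stochastic strongly connected component of $\mathcal{B}$, which by property~c) necessarily contains $Z$), and you avoid the partition-function bound in Lemma~\ref{le:T}. What the paper's approach gains is that it stays closer to elementary Markov-chain facts (absorption probabilities) rather than invoking the Perron--Frobenius lemma that an irreducible substochastic matrix with spectral radius $1$ is stochastic; it also makes explicit the decomposition $R=R_1\circ R_2$ that mirrors the proof of Theorem~\ref{th:regular-negl-thick} in \cite{multiplicative}. Both arguments ultimately hinge on the same phenomenon---the existence of a state in $\mathcal{B}$ with strictly fewer outgoing edges than its prototype in $\mathcal{A}$---but you detect it spectrally while the paper detects it via absorption into the dead state.
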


Before proving the theorem we establish a few preliminary facts. We
fix a prefix-closed regular subset $L$ of $F$.

\begin{proposition}
\label{le:basic-Lsparse} Let $R_1$ and $R_2$ be subsets of $F$. Let
also $P$ be one of the properties $\{$ ''to be $L-$measurable'',
''to be exponentially $L-$negligible'', ''to be
$\lambda_L-$measurable'', ''to be exponentially
$\lambda_L-$measurable''$\}.$ Then the following hold:
 \bi
   \item[1)] If $R_1 \subseteq R_2$ and $R_2$ has property $P$ then so is $R_1$.

    \item [2)] If $R_1, R_2$  have property $P$ then so is $R_1 \cup R_2$.

\item [3)]  If $R_1$ and $R_2$ have property $P$ then so is the set
    $$R_1 \circ R_2 = \{r_1r_2 \mid r_i \in R_i, \ c(r_1,r_2) = 0 \}.$$

 \ei

\end{proposition}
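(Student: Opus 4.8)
The plan is to treat all four properties uniformly by reducing each of them to a statement about the tail behaviour of the sequences $f_k(R,L)$ or $f'_k(R,L)$, and then to mimic the proofs of Propositions \ref{le:basic-sparse} and \ref{le:basic-sparse-2} in the relative setting. The key structural observation is that each of the four properties $P$ is a condition on a single nonnegative sequence attached to $R$: being $L$-measurable means $\sum_k f_k(R,L) < \infty$; being exponentially $L$-negligible means $f_k(R,L) \leq q^k$ eventually; being $\lambda_L$-measurable means $\sum_k f'_k(R,L) < \infty$; and being exponentially $\lambda_L$-measurable means $f'_k(R,L) \leq q^k$ eventually. So the first thing I would record is that in all four cases the defining quantity is monotone and additive in $R$ at each fixed length $k$: if $R_1 \subseteq R_2$ then $|R_1 \cap S_k| \leq |R_2 \cap S_k|$ and $\lambda_L(R_1 \cap S_k) \leq \lambda_L(R_2 \cap S_k)$, and if $R_1, R_2$ are disjoint then the $k$-th term for $R_1 \cup R_2$ is the sum of the two $k$-th terms.

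With that in hand, statements 1) and 2) are nearly immediate. For 1), monotonicity at each length gives $f_k(R_1,L) \leq f_k(R_2,L)$ and $f'_k(R_1,L) \leq f'_k(R_2,L)$ for every $k$; hence a convergent tail sum dominates, and an eventual bound $q^k$ is inherited. For 2), I would first reduce to the disjoint case by writing $R_1 \cup R_2 = R_1 \sqcup (R_2 \smallsetminus R_1)$ and applying 1) to $R_2 \smallsetminus R_1 \subseteq R_2$; then subadditivity gives, at each length $k$, a bound of the form $f_k(R_1 \cup R_2, L) \leq f_k(R_1,L) + f_k(R_2,L)$ (and likewise for $f'_k$). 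For the measurability properties this yields convergence of the sum; for the exponential properties, if $f_k(R_i,L) \leq q_i^k$ eventually then $f_k(R_1 \cup R_2, L) \leq q_1^k + q_2^k \leq 2\max(q_1,q_2)^k$, which is bounded by $q^k$ for any $q$ with $\max(q_1,q_2) < q < 1$ once $k$ is large enough to absorb the factor $2$. The same absorption argument handles the $\lambda_L$-versions verbatim.

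The real work is in statement 3), the closure under concatenation $R_1 \circ R_2$. Here I would argue exactly as in the proofs of Propositions \ref{le:basic-sparse} and \ref{le:basic-sparse-2}: because there is no cancellation, an element of $R_1 \circ R_2$ of length $k$ decomposes \emph{uniquely} as $r_1 \circ r_2$ with $|r_1| + |r_2| = k$, so the length-$k$ slice of $R_1 \circ R_2$ injects into $\bigsqcup_{i+j=k} (R_1 \cap S_i) \times (R_2 \cap S_j)$. For the $L$-measurability and exponential $L$-negligibility cases this gives a convolution bound $f_k(R_1 \circ R_2, L) \leq \sum_{i+j=k} f_i(R_1,L)\, f_j(R_2,L) \cdot (\text{normalizing factor from } |S_k|)$, and the convolution of two summable (respectively, geometrically decaying) sequences is again summable (respectively, geometrically decaying, since $\sum_{i+j=k} q^i q^j = (k+1)q^k \leq q'^k$ eventually for any $q < q' < 1$). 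For the $\lambda_L$-versions I would use the key fact, already established just before the theorem, that $\lambda_L$ is \emph{multiplicative}: $\lambda_L(r_1 \circ r_2) = \lambda_L(r_1)\lambda_L(r_2)$ whenever $r_1 \circ r_2 \in R$. This multiplicativity turns $f'_k(R_1 \circ R_2, L)$ directly into the convolution $\sum_{i+j=k} f'_i(R_1,L) f'_j(R_2,L)$, and the same convolution estimates close the argument.

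The main obstacle I anticipate is exactly the passage through $\lambda_L$ in part 3): one must be careful that multiplicativity of $\lambda_L$ is only asserted for products $u \circ v$ that actually lie in $R$, and more subtly that the weight $\lambda_L(w)$ depends on the walk on $\mathcal{B}$ (through the prototype branching numbers $\nu(S^{pr})$) rather than on a clean length function. So the factorization $\lambda_L(r_1 \circ r_2) = \lambda_L(r_1)\lambda_L(r_2)$ needs the prefix $r_1$ and the full word $r_1 \circ r_2$ to be read along compatible states of $\mathcal{B}$, which is guaranteed precisely because $\mathcal{B}$ is consistent with $\mathcal{A}$ and $R \subseteq L$ with $L$ prefix-closed. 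Once this compatibility is invoked, the convolution bound and the geometric-decay absorption $(k+1)q^k \leq q'^k$ are routine, so the whole proposition follows the template of the earlier sparseness propositions with the frequency $f_k$ replaced throughout by either the relative frequency $f_k(\cdot,L)$ or the walk weight $f'_k(\cdot,L)$.
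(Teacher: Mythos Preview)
Your proposal is correct and follows the natural elementary approach that the paper has in mind; in fact the paper's own proof consists of the single line ``The proofs are easy,'' so you have supplied considerably more detail than the authors do. Your reduction of all four properties to tail conditions on $f_k(\cdot,L)$ or $f'_k(\cdot,L)$, the monotonicity/subadditivity arguments for 1) and 2), and the convolution-plus-absorption argument $(k+1)q^k \leq q'^k$ for 3) are exactly the intended template, inherited from Proposition~\ref{le:basic-sparse}.
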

  \begin{proof}
The proofs are easy.
\end{proof}

To strengthen the last statement in Proposition \ref{le:basic-Lsparse}  we need the following notation.
For a subset $T \subseteq F$ put $T^\circ_1 = T$ and define recursively  $T^\circ_{k+1} = T^\circ_k \circ T$. Denote
$$T^\circ_{\infty} = \bigcup_{k = 1}^\infty T^\circ_k.$$

\begin{lemma} \label{le:T}
Let $T$ be a regular set and a number $q$,   $0 < q < 1$, such that
$f_k^\prime(T,L) \leq q^k$ for every positive integer $k$. Then the set
$T^\circ_\infty$  is exponentially $\lambda_L-$measurable.
\end{lemma}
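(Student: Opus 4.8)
The plan is to control the $\lambda_L$-mass $b_k := f^\prime_k(T^\circ_\infty, L)$ carried by the length-$k$ words of $U := T^\circ_\infty$, exploiting the multiplicativity of $\lambda_L$ together with the set identity $U = T \cup (U \circ T)$, which is immediate from $T^\circ_{j+1} = T^\circ_j \circ T$. First I would record that, since $\lambda_L(u\circ v) = \lambda_L(u)\lambda_L(v)$ whenever all factors and their product lie in $L$, the reduced concatenation $U\circ T$ has $\lambda_L$-mass at level $k$ bounded by a convolution: every length-$k$ element of $U\circ T$ is $u\circ t$ with $u\in U\cap S_{k-i}$ and $t\in T\cap S_i$, and summing over the (possibly several) such splittings gives $f^\prime_k(U\circ T, L)\le \sum_{i=1}^{k-1} b_{k-i}\, f^\prime_i(T,L)$. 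Writing $a_i := f^\prime_i(T,L)\le q^i$, the set inclusion then yields the recursion $b_k \le q^k + \sum_{i=1}^{k-1} q^i\, b_{k-i}$, with the $i=k$ term absorbed by the leading $q^k$.

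The second step is to solve this recursion by generating functions. Putting $B(z)=\sum_{k\ge1} b_k z^k$ and $Q(z)=\sum_{k\ge1} q^k z^k = \fracd{qz}{1-qz}$, the inequality reads $B(z)\le Q(z) + Q(z)B(z)$, hence $B(z)\le \fracd{Q(z)}{1-Q(z)} = \fracd{qz}{1-2qz}$ on the disc where the right-hand side is analytic. Comparing coefficients gives $b_k\le 2^{k-1}q^k$, so whenever $q<1/2$ the set $T^\circ_\infty$ is exponentially $\lambda_L$-measurable with rate $2q<1$. This part uses only multiplicativity and the concatenation clause of Proposition \ref{le:basic-Lsparse}, and already settles the statement for sufficiently negligible $T$.

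The hard part is the regime $1/2\le q<1$, where this crude bound fails: the factor $2^{k-1}$ is precisely the number of $\circ$-factorizations that the convolution overcounts, and since $\circ$-factorization into $T$-blocks need not be unique (for instance $a^5 = a^2\circ a^3 = a^3\circ a^2$), no passage to a generating \emph{code} removes this loss. To reach $q<1$ I would instead argue at the level of growth rates. The set $U=T^\circ_\infty$ is regular, so I would form the product of a deterministic recogniser of $U$ with the automaton $\mathcal{A}$ of $L$, and read off $b_k$ as $\langle\pi, M^k\chi\rangle$ for the nonnegative transfer matrix $M$ whose entries are the walk frequencies $1/\nu(\cdot)$; by Perron--Frobenius $b_k\asymp\theta^k$ for the spectral radius $\theta$ of the relevant irreducible block. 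On $\mathcal{A}$ alone the corresponding matrix is stochastic, since the $L$-walk conserves mass ($f^\prime_k(L,L)=1$), so $\theta\le1$; the whole point is the strict inequality $\theta<1$.

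I expect the strictness $\theta<1$ to be the genuine obstacle. The mechanism I would exploit is that $U$-membership forces the walk to keep completing $T$-blocks, while $f^\prime_k(T,L)\le q^k$ with $q<1$ guarantees that at every scale a definite fraction of the $\lambda_L$-mass leaks out of the decomposability constraint: if $\theta=1$ there would be a recurrent class of the product on which mass is conserved, i.e. on which the walk stays $\circ$-decomposable forever, and summing the block-completion probabilities along such a class would contradict their exponential decay. Concretely this is the same phenomenon as in the model case $T=\{a,b,a^{-1}\}$, where $U$ is the set of reduced words omitting $b^{-1}$ and the Perron eigenvalue drops from the ambient $2m-1$ to $1+\sqrt2$; the remaining task is to convert the uniform bound $f^\prime_k(T,L)\le q^k$ into such a strict spectral gap in general. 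Once $\theta<1$ is secured, $b_k\le(\theta')^k$ for any $\theta<\theta'<1$ and all large $k$, which is exactly exponential $\lambda_L$-measurability of $T^\circ_\infty$.
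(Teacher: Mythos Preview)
The paper's argument is much shorter and does not split into cases. It writes each $w\in T^\circ_\infty\cap S_k$ as $u_1\circ\cdots\circ u_t$ with $|u_1|+\cdots+|u_t|=k$, uses multiplicativity to bound the total contribution of each such length-decomposition by $q^{k_1}\cdots q^{k_t}=q^k$, and then asserts
\[
f^{\prime}_k(T^\circ_\infty,L)\;\le\;p(k)\,q^k,
\]
where $p(k)$ is the \emph{unordered} integer partition function. Invoking the Hardy--Ramanujan asymptotic $p(k)\sim \fracd{e^{\pi\sqrt{2k/3}}}{4k\sqrt{3}}$, which is subexponential, one gets $p(k)\,q^k<q_1^{\,k}$ for some $q<q_1<1$ and all large $k$, independently of whether $q\ge 1/2$. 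So the paper's route dispenses with your entire Perron--Frobenius programme in one line.

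Your instinct that the honest count is \emph{compositions}, not partitions, is nevertheless correct. The decomposition $w=u_1\circ\cdots\circ u_t$ is ordered; two different orderings of the same multiset of block-lengths generally give different reduced words, so an unordered partition $P$ with $n_P$ distinct orderings contributes up to $n_P\,q^k$, not $q^k$, and summing over $P$ returns exactly your $\sum_P n_P=2^{k-1}$. The paper does not justify replacing $2^{k-1}$ by $p(k)$, and as written this step is the same gap you identified via the generating-function bound $B(z)\le qz/(1-2qz)$. So the difference between the two approaches is precisely this: the paper's is short because it takes the partition count for granted, while yours is longer because you do not; your spectral sketch for $q\ge 1/2$ is a reasonable direction for a genuine repair, though the strict inequality $\theta<1$ that you flag as the crux is indeed where the work remains.
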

\begin{proof}

  Every word $w \in T^\circ_\infty$ of length $k$ comes in the form $w = u_1\circ u_2\circ \ldots \circ u_t$, where $u_i$'s
  are non-trivial elements from $T$ and $k = |u_1| + \ldots +|u_t|$.  On the other hand, if $k = k_1 + \ldots +k_t$ is an arbitrary
  partition of $k$ into a sum of positive integers and $u_1, \ldots, u_t$ are words in $T$ such that $u_i = k_i$,
  then $w = u_1\ldots u_t \in T^\circ_\infty$. Since $\lambda_L$ is multiplicative every partition of $k$ adds to $f^{\prime}_k(T^\circ_\infty,L)$ a
  number $f^{\prime}_{k_1}(T^\circ_\infty,L) \ldots f^{\prime}_{k_t}(T^\circ_\infty,L)$, which is bounded from above by $q^{k_1 + \ldots +k_t} = q^k$.
  If $p(k)$ is the  number of all  partitions of $k$ into a sum of positive integers then $f^{\prime}_k(T^\circ_\infty,L) \leq p(k)q^k$.
   It is known (Hardy and Ramanujan) that
$$p(k) \sim \fracd{e^{\pi\sqrt{\frac{2k}{3}}}}{4k\sqrt{3}}.$$
Hence  $f^{\prime}_k(T^\circ_\infty,L) < q_1^k,$ for some $0 < q <
q_1 < 1$ and all sufficiently large $k$,  so  $T^\circ_\infty$ is
exponentially $\lambda_L-$measurable, as claimed.
\end{proof}

\medskip \noindent {\it Proof  of Theorem \ref{Lcones}}.
In the most part we follow the proof of Theorem
\ref{th:regular-negl-thick} from \cite{multiplicative}. Suppose that
all $L$-cones in $\overline{R}$ are non-small.  Since $R \subseteq
\overline{R}$ by Proposition \ref{le:basic-Lsparse} we can assume
that $R$ itself is prefix-closed in $L.$ We have to prove that $R$
is exponentially $\lambda_L$-measurable. Let $R = L(\mathcal{B})$
and $\mathcal{B}$ consistent to $\mathcal{A}$ (where $\mathcal{A}$
recognize $L$).

 It is convenient to further split $\mathcal{B}$ into two parts. Denote by  $\mathcal{B}_1$ the automaton obtained
from $\mathcal{B}$ by removing all arrows exiting from $Z$.

\begin{center}
 \includegraphics[width=5cm]{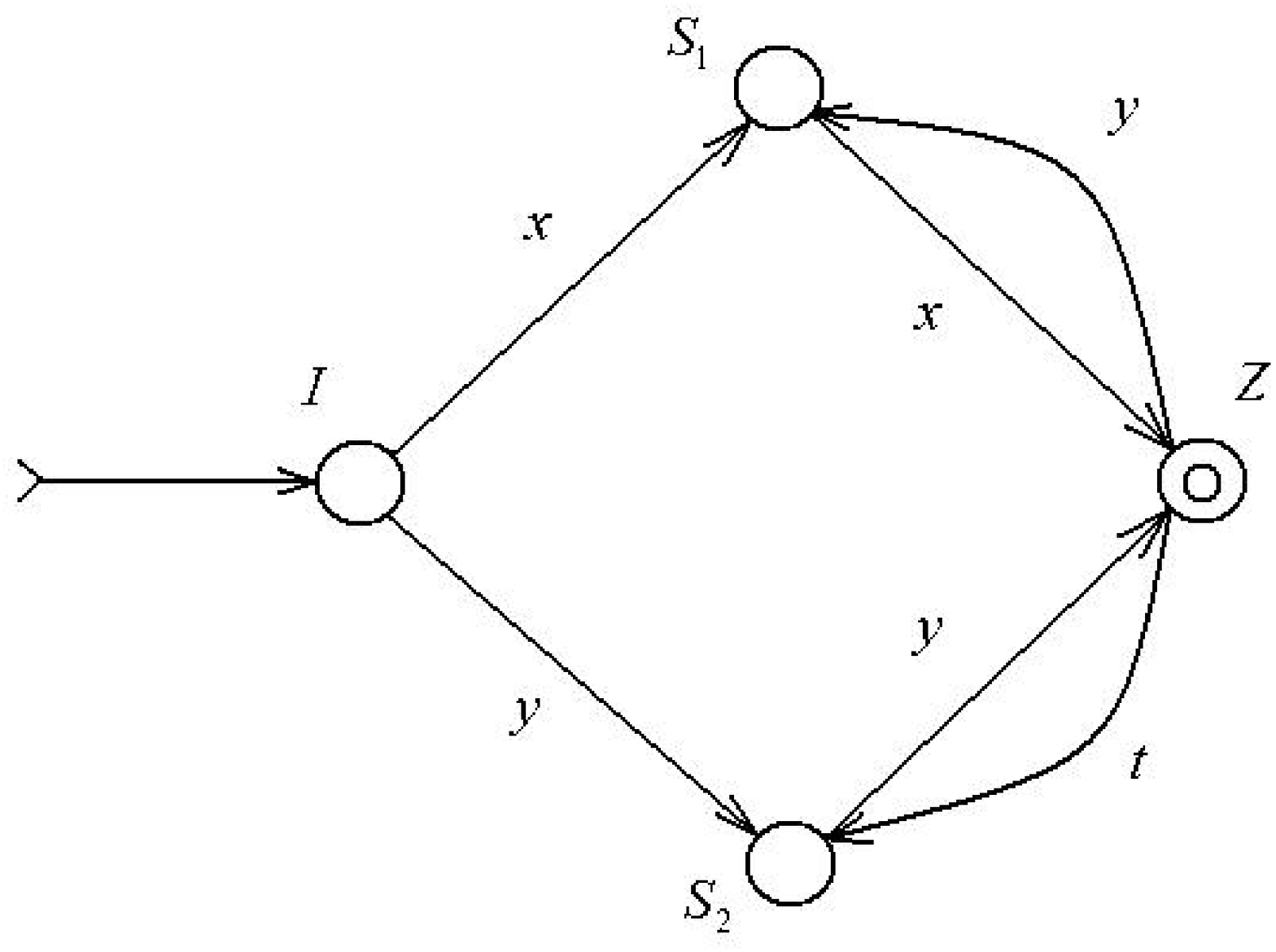}\\
Pic. 4. An automaton $\mathcal{B}.$
\end{center}

\begin{center}
 \includegraphics[width=5cm]{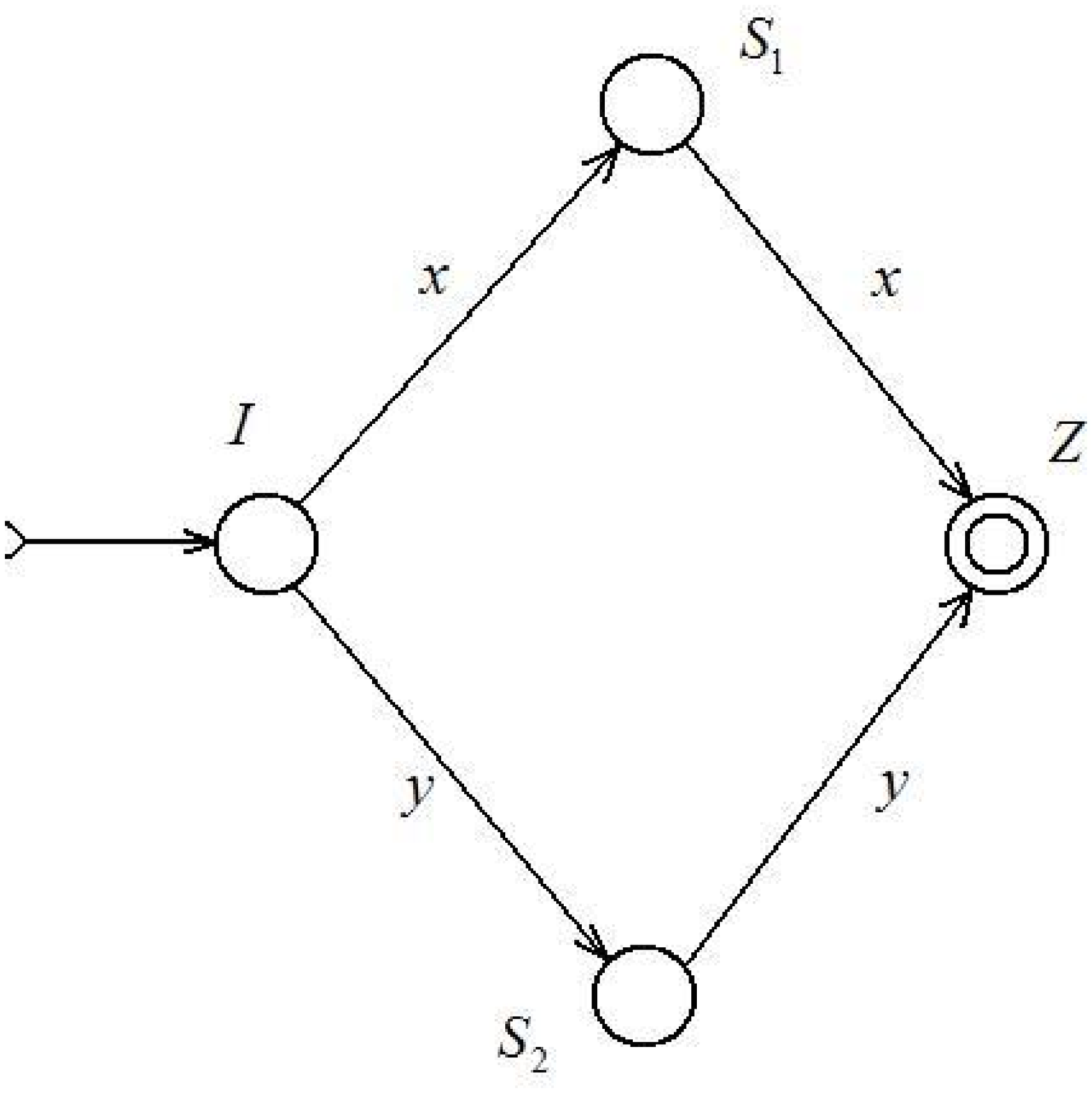}\\
Pic. 5. An automaton $\mathcal{B}_1.$
\end{center}

Let  $\mathcal{B}_2$ be  the automaton formed by all states in
$\mathcal{B}$ that are accessible from the state $Z,$ with the same
arrows between them as in $\mathcal{B};$ $Z$ is the  only initial
and  accepting state of  $\mathcal{B}_2$.

\begin{center}
 \includegraphics[width=5cm]{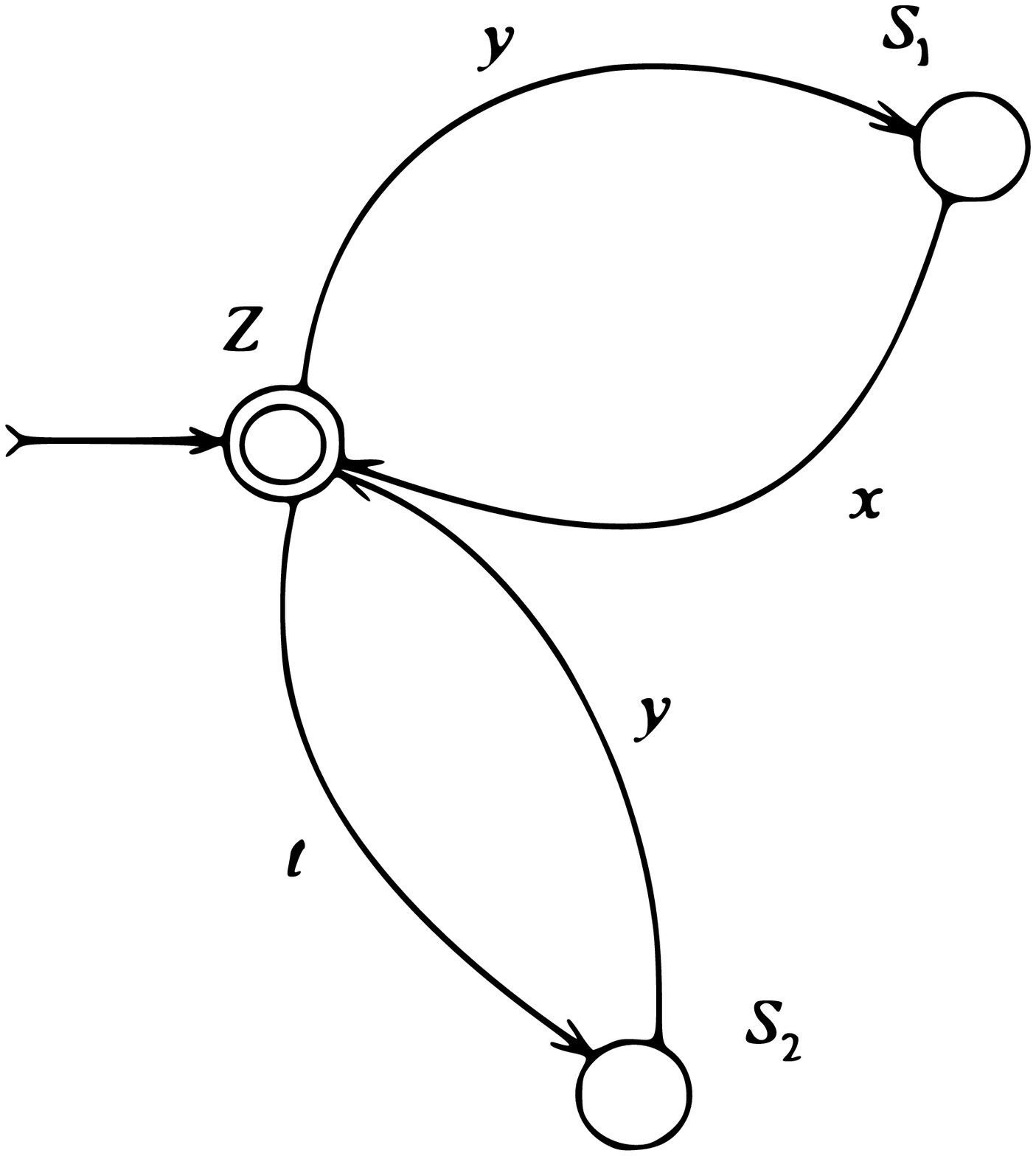}\\
Pic. 6. Automaton $\mathcal{B}_2.$
\end{center}

We assign to arrows in $\mathcal{B}_1$ and $\mathcal{B}_2$ the same
frequencies as to the corresponding arrows in $\mathcal{B}.$ If
$R_1$ and $R_2$ are the languages accepted by $\mathcal{B}_1$ and
$\mathcal{B}_2$ then, obviously, $R = R_1 \circ R_2.$  By
Proposition \ref{le:basic-Lsparse} to prove the theorem it suffices
to show that $R_1$ and $R_2$ are exponentially
$\lambda_L$-measurable.

\medskip \noindent
{\it Claim}. {\it The set $R_2$ is exponentially
$\lambda_L$-measurable.}

\medskip \noindent
{\it Proof of the claim}. Notice, that for every $w \in R_1$ $w
\circ R_2 \subseteq L(\mathcal{A}) = R$  and $w \circ R_2$ is an
$L-$cone. It is easy to see, that $R_2$ is exponentially
$\lambda_L$-measurable if and only if so $w \circ R_2$ is.

Let $R_3 \subseteq R_2$ be the  subset consisting  of  those non-trivial
words $w \in R_2,$ whose paths $p_w$  visit the state $Z$ of
$\mathcal{B}_2$ only once. The set $R_3$ is regular - it is accepted by an automaton
$\mathcal{B}_3$, which is obtained from
$\mathcal{B}_2$ by splitting the state $Z$ into two separate states: the initial
state $Z_1$ and an accepting state $Z_2$, in such a way that  the edges exiting from $Z$ in $\mathcal{B}_2$ are
now exiting from $Z_1$ and there no ingoing edges at $Z_1$, while there are no edges exiting from $Z_2$ and all
those arrows incoming in $Z$ in $\mathcal{B}_2$ are
now incoming into  $Z_2$.
\begin{center}
 \includegraphics[width=6cm]{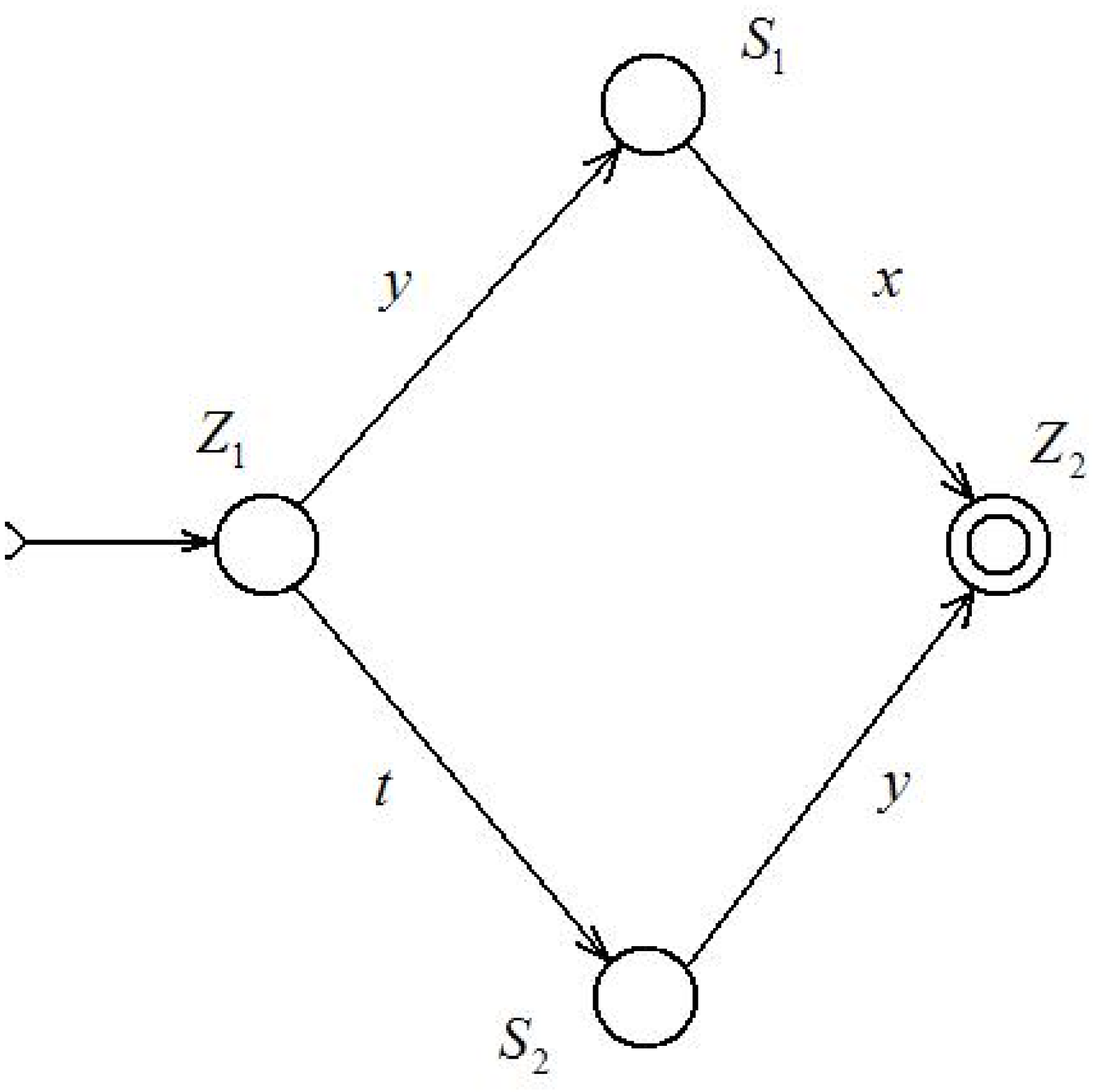}\\
Pic. 7. An automaton $\mathcal{B}_3.$
\end{center}

 It follows
immediately from the  construction, that
$$R_2  = \{\varepsilon\} \bigcup R_3 \bigcup (R_3 \circ R_3) \bigcup (R_3 \circ R_3 \circ R_3) \bigcup  \ldots  = (R_3)^\circ_\infty$$
so
\begin{equation}\label{r2tor3}
\lambda_L(R_2) = \lambda_L(R_3) + (\lambda_L(R_3))^2 +
(\lambda_L(R_3))^3 + \ldots .
\end{equation}

By Proposition \ref{le:T} it suffices to show that there is a number
$q, 0 < q < 1$,  such that $f^{\prime}_k(R_3,L) \leq q^k$ for every
$k$ (not only for sufficiently large $k$). It is not hard to see
that this condition holds if $R_3$ is exponentially
$\lambda_L$-measurable and $\lambda_L(R_3) < 1$, so it suffices to
prove the latter two statements.

By our assumption all $L$-cones in $R = \overline{R}$ are $L-$small.
If for every state $[w]=S$ in $\mathcal{B}_2$ and every $x \in X
\cup X^{-1}$ there is an outgoing edge labeled by $x$ at $[w]$ if
and only if the same holds for the state $S^{pr}$ in $\mathcal{A}$
(i.e.,
 $\mathcal{B}_2$ is $X-$complete relative to $\mathcal{A}$) then for every given $w \in R_1$
 one has $C(w) \cap  \overline{R} = w \circ \overline{R_2} = C(w) \cap L$, so $w \circ \overline{R_2}$ is an $L$-cone.
 Hence it is $L-$small, i.e., exponentially $\lambda_L$-measurable, but then the set $\overline{R_2}$, hence $R_2$, is exponentially
 $\lambda_L$-measurable, as claimed.

This implies that for some state
$S$ there are less then $\nu = \nu(S^{pr})$ arrows exiting from $S$.
 Consider a finite Markov chain
$\mathcal{M}$ with the same states as in $\mathcal{B}_3$ together
with an additional dead state $D$. We set transition probabilities
from $Z_2$ to $Z_2$ and from $D$ to $D$ being equal $1.$ Every arrow
from a state $S$ in $\mathcal{B}_3$ gives the corresponding
transition from the state $S$ in $\mathcal{M}$ which we assign  the
transition probability $\fracd{1}{\nu }.$ If at some state $S$ of
type $x$ in $\mathcal{B}_3$ there is no exiting arrow labeled $y \in
(X \cup X^{-1}) \smallsetminus \{x^{-1}\},$ in $\mathcal{M}$ we make
a transition from $S$ to $D$ with the transition probability
$\fracd{1}{\nu }.$ This describes $\mathcal{M}$.

\begin{center}
 \includegraphics[width=7cm]{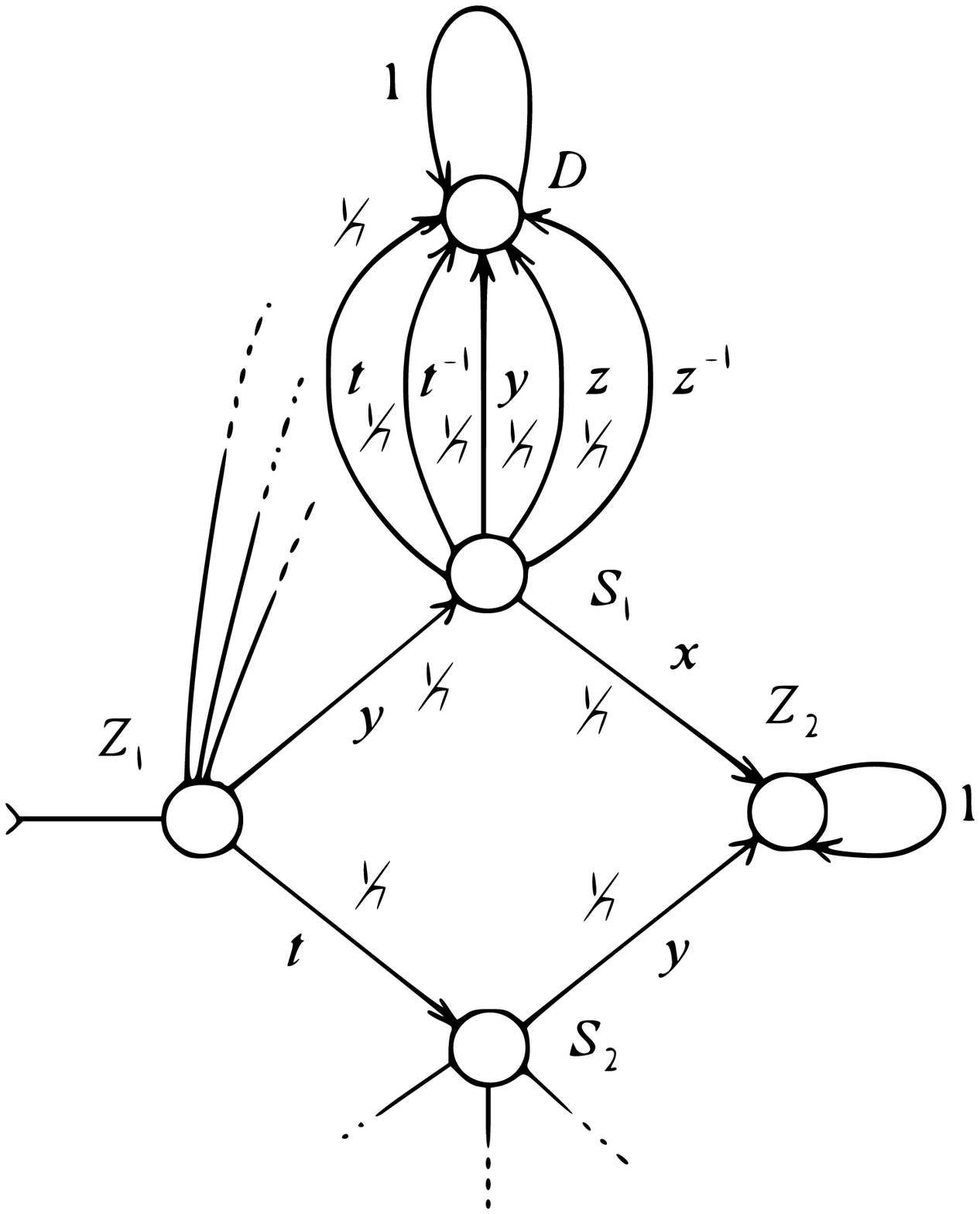}\\
Pic. 8. An automaton $\mathcal{M}.$
\end{center}

The states $Z_2$ and $D$ of Markov chain $\mathcal{M}$ are
absorbing, and all other states are transient. The probability
distribution on $\cal M$ concentrated at the initial state $Z_1$,
converges to the steady state $P$ which is zero everywhere with the
exception of the two absorbing states $Z_2$ and $D$. Obviously,
$P(Z_2) = \lambda_L(R_3)$.  Since $P(D) \ne
0$ we have $\lambda_L(R_3) = P(Z_2) < 1$, so one of the required
conditions on $R_3$ holds (for more details on this proof we refer
to \cite{multiplicative,KS}). The other one follows directly from
Corollary 3.1.2 in \cite{KS}, which claims that in this case $R_3$
is exponentially $\lambda_L$-measurable. This proves the claim.

A similar argument shows that $R_1$ is exponentially
$\lambda_L$-measurable. This proves the theorem.

{\bf Acknowledgements. } The authors thank Alexandre Borovik for
very fruitful discussions.


\end{document}